\newtheorem{theorem}{Theorem}[section]
\newtheorem{claim}{}[theorem]
\newtheorem{lemma}[theorem]{Lemma}
\newtheorem{corollary}[theorem]{Corollary}
\newtheorem{conjecture}[theorem]{Conjecture}
\newcommand{\bF}{\mathbb F}
\newcommand{\cE}{\mathcal{E}}
\newcommand{\cH}{\mathcal{H}}
\newcommand{\cM}{\mathcal{M}}
\newcommand{\cO}{\mathcal{O}}
\DeclareMathOperator{\cl}{cl}
\DeclareMathOperator{\PG}{PG}
\DeclareMathOperator{\AG}{AG}
\newcommand{\AGpp}{\AG^{\circ}}
\newcommand{\ip}[2]{\left\langle #1,#2 \right\rangle}
\newcommand{\twist}[2]{#1 \rtimes #2}
\newcommand{\twistw}[3]{#1 \rtimes_{#3} #2}
\newcommand{\floor}[1]{\left\lfloor #1 \right\rfloor}
\DeclareMathOperator{\BB}{BB}
\begin{document}

\begin{frontmatter}[classification=text]

\title{The Structure of Binary Matroids with no Induced Claw or Fano Plane Restriction} 

\author[Bonamy]{Marthe Bonamy}
\author[Kardos]{Franti\v{s}ek Kardo\v{s}}
\author[Kelly]{Tom Kelly}
\author[Nelson]{Peter Nelson}
\author[Postle]{Luke Postle}

\begin{abstract}
An \emph{induced restriction} of a simple binary matroid $M$ is a restriction $M|F$, where $F$ is a flat of $M$. We consider the class $\cM$ of all simple binary matroids $M$ containing neither a free matroid on three elements (which we call a \emph{claw}), nor a Fano plane as an induced restriction. We give an exact structure theorem for this class; two of its consequences are that the matroids in $\cM$ have unbounded critical number, while the matroids in $\cM$ not containing the clique $M(K_5)$ as an induced restriction have critical number at most $2$.
\end{abstract}
\end{frontmatter}




\section{Introduction}        

This paper considers questions from the theory of induced subgraphs in the setting of simple binary matroids. We adopt a nonstandard formalism of binary matroids, incorporating the notion of an `ambient space' into the definition of a matroid itself. A \emph{simple binary matroid} (hereafter just a \emph{matroid}) is a pair $M = (E,G)$, where $G$ is a finite-dimensional binary projective geometry and the \emph{ground set} $E$ is a subset of (the points of) $G$. The \emph{dimension} of $M$ is the dimension of $G$. If $E = G$, then we refer to $M$ itself as a \emph{projective geometry}. If no proper subgeometry of $G$ contains $E$, then $M$ is \emph{full-rank}. 

We say a matroid $M = (E,G)$ contains a matroid $N = (F,G')$ as a \emph{restriction} if there is a linear injection $\varphi\colon G' \to G$ for which $\varphi(F) \subseteq E$. If $\varphi(F) = E \cap \varphi(G')$, then we say $M$ contains $N$ as an \emph{induced restriction} and we refer to $\varphi$ as an \emph{induced embedding} of $N$ in $M$. If $\varphi$ is an induced embedding and also a bijection, then $M$ and $N$ are \emph{isomorphic}. Given this notion of isomorphism, it follows from the uniqueness of representability of binary matroids that matroids by our definition are simple binary matroids in the usual sense, except we allow our matroids to have a ground set not spanning $G$; a matroid whose ground set does not span $G$ is analogous to a graph with isolated vertices.

The \emph{critical number} $\chi(M)$ of an $n$-dimensional matroid $M = (E,G)$, is the smallest integer $k \ge 0$ such that $G-E$ contains a projective geometry of dimension $n-k$. This parameter (also called \emph{critical exponent}) is a matroidal version of chromatic number --  for example, if $M(G)$ is the cycle matroid of a graph $G$, then $\chi(M(G)) = \lceil \log_2 \chi(G) \rceil$ -- and can also be defined in terms of cocycles or the Tutte polynomial. See (\cite{oxley} p. 588) for a discussion and further reading, or \cite{kung} for a detailed treatment.

For each $t \ge 1$, write $\PG(t-1,2)$ as shorthand for the matroid $(E,G)$ where $E = G \cong \PG(t-1,2)$, and let $I_t$ denote the unique $t$-dimensional, $t$-element  matroid of full rank (its ground set is a `basis').  Write $F_7$ for the Fano matroid $\PG(2,2)$.  Let $K_5$ denote the matroid $(E,G)$ where $G \cong \PG(3,2)$ has elements naturally identified with the nonzero vectors in $\bF_2^4$, and $E$ comprises all vectors whose Hamming weight is $1$ or $2$. (In more traditional language, this is the cycle matroid of the five-vertex complete graph.) We prove the following. 

\begin{theorem}\label{main1}
	If $M$ is a simple binary matroid containing neither $I_3, F_7$, nor $K_5$ as an induced restriction, then $\chi(M) \le 2$. 
\end{theorem}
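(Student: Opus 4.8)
The plan is to derive Theorem~\ref{main1} from a structure theorem for the class $\cM$ of matroids with no induced claw ($I_3$) or Fano ($F_7$) restriction: most of the work goes into that structure theorem, and banning an induced $K_5$ cuts it down to exactly the matroids of critical number at most $2$. Two observations organise everything. First, membership in $\cM$ and the absence of an induced $K_5$ are both inherited by induced restrictions, and $\chi$ is monotone under induced restriction --- if $W$ is a flat of $G$ of dimension $\dim G - 2$ disjoint from $E$ and $F$ is a flat of $G$, then $W \cap F$ is a flat of $F$ of dimension at least $\dim F - 2$ disjoint from $E \cap F$ --- so it suffices to bound $\chi$ on the maximal members of the class. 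Second, we may assume $M = (E,G)$ is full rank: if $E$ spans a proper flat $G_0$ of $G$ then, combining a flat $W_0$ of $G_0$ disjoint from $E$ with a flat of $G$ meeting $G_0$ trivially, one sees $\chi(M)\le 2$ as soon as $\chi\big((E,G_0)\big)\le 2$, so we may pass to $(E,G_0)$.

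The core is the structure theorem, which I would prove by induction on $n=\dim M$. The governing local fact is: for every rank-$3$ flat $F$ the complement $G-E$ meets $F$, and if $|F\setminus E| = 4$ then $E\cap F$ is a line (these are exactly ``no induced $F_7$'' and ``no induced $I_3$''). The goal of the inductive step is to produce a hyperplane $H$ of $G$ with $\chi(M|H)\le 1$ --- equivalently, with $E\cap H$ contained in an affine hyperplane of $H$ --- since a flat of $H$ of dimension $n-2$ disjoint from $E\cap H$ lies in $H$ and hence is disjoint from all of $E$. I would split on whether $E$ contains a full affine geometry $G\setminus H_0$ for some hyperplane $H_0$, i.e.\ on whether $G-E$ fails to span $G$. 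If so, then $G-E\subseteq H_0$, and the local conditions force $S := E\cap H_0$ to be a cap of $H_0$ in which any three (necessarily independent) points $a,b,c$ satisfy $a+b+c\in S$; a short argument then shows $S+s_0$ is a linear subspace for any $s_0\in S$, so $S$ lies in an affine hyperplane of $H_0$ and $\chi(M)\le 2$. If instead $G-E$ spans $G$, one shows that $n$ must be bounded: a high-dimensional $\cM$-matroid whose complement is full rank is forced to contain an induced $K_5$, and the finitely many remaining small configurations are checked directly.

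The main obstacle is this last forcing statement --- that once $K_5$ is also forbidden, the only $\cM$-matroids whose complement spans $G$ have bounded dimension. Proving it means tracking the intersection pattern of $E$ with the rank-$3$ and rank-$4$ flats through a fixed point or line and showing that a complement spreading across all of $G$ cannot indefinitely avoid the particular rank-$4$ configuration that $K_5$ is while simultaneously avoiding $I_3$ and $F_7$. Making that propagation argument precise, keeping the list of exceptional small matroids genuinely finite and checkable, and meshing it cleanly with the affine-geometry recursion of the other case, is where I expect the bulk of the difficulty to lie.
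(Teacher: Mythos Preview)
Your first case (when $G-E$ is contained in a hyperplane $H_0$, so $E$ contains an affine geometry) is correct and coincides with the paper's Lemma~\ref{recogniseagplus}. The second case, however, rests on a false claim: it is \emph{not} true that a full-rank $(I_3,F_7,K_5)$-free matroid with $G-E$ spanning $G$ must have bounded dimension. For each $n \ge 3$ take $M_0 = \AG(n-1,2)=(G_0-H_0',G_0)$ and let $M'$ be its semidoubling by $w$ with respect to any hyperplane $H_0 \ne H_0'$. In coordinates, with $G'=\bF_2^{n+1}-\{0\}$, $H_0'=\{x_1=0\}$, $H_0=\{x_2=0\}$, $w=e_{n+1}$, one gets
\[
E'=\{x: x_{n+1}=0,\ x_1=1\}\ \cup\ \{x: x_{n+1}=1,\ x_1+x_2=1\}.
\]
This $M'$ is $(n+1)$-dimensional, full-rank, lies in $\cE_3$ (hence has no induced $I_3$ or $F_7$), and both $E'$ and $G'-E'$ span $G'$. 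For any $4$-dimensional flat $F=V-\{0\}$, restrict the functionals $\alpha=x_1$, $\beta=x_1+x_2$, $\gamma=x_{n+1}$ to $V$; a short case check on whether $\alpha,\beta$ are zero, equal to $\gamma$, or independent of $\gamma$ shows $|F\cap E'|\in\{0,4,8,12\}$. Since $|E(K_5)|=10$, the matroid $M'$ has no induced $K_5$. So the ``large dimension forces $K_5$'' step cannot be made to work, no matter how carefully one tracks rank-$3$ and rank-$4$ flats.

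The paper never bounds dimension. It proves the structure theorem (Theorem~\ref{maintech}): every full-rank $(I_3,F_7)$-free matroid is either in $\cE_3$ or arises from $\AGpp(t-1,2)$ by doublings. The latter outcome gives $\chi=2$ immediately and corresponds to your first case. For the $\cE_3$ outcome the paper argues directly that $\chi(M)\ge 3$ forces an induced $K_5$: when $\chi\ge 3$, every hyperplane witnesses $M$ as a semidoubling of $M|H$ with respect to some $H'$ (Corollary~\ref{findsemidoubler1}), and $\chi(M|H')=\chi(M)-1\ge 2$ (Lemma~\ref{pushchi}); hence $E\cap H'$ contains a triangle $T$, and extending $T$ to a plane $P$ of $H$ with $P\cap H'=T$, the $4$-flat $\cl(P\cup\{w\})$ is the semidoubling of $M|P$ with respect to $T$, which one checks is $K_5$. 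The key idea you are missing is thus a \emph{local} construction of $K_5$ from the semidoubling structure when $\chi\ge 3$, not a global finiteness statement.
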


In fact, we show much more. Let $\cE_3$ denote the class of matroids whose induced restrictions of dimension at least $3$ all have ground sets of even size; we call these the \emph{even plane matroids}. (Note that in our language, it is meaningful to discuss three-dimensional induced restrictions that are not full-rank; we emphasise that our definition insists that even these `rank-deficient' induced restrictions have even size.) We will consider this class, which includes $K_5$ as well as all affine geometries, in some detail in what follows. Our main result gives a qualitative generalisation of Theorem~\ref{main1} by bounding $\chi$ for the induced $I_3,F_7$-restriction-free matroids that also omit $N$, for an arbitrary fixed $N \in \cE_3$. 

\begin{theorem}\label{main2}
	For every matroid $N \in \cE_3$, if $M$ is a simple binary matroid containing neither $I_3$, $F_7$, nor $N$ as an induced restriction, then $\chi(M) \le \dim(N) + 4$. 
\end{theorem}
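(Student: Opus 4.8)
The plan is to deduce Theorem~\ref{main2} from a structure theorem for $\cM$ that we establish in the remainder of the paper; proving that theorem, not the deduction, is where essentially all of the difficulty lies. The form we will need is a dichotomy: there is an absolute constant $c_0$ such that every $M\in\cM$ either satisfies $\chi(M)\le c_0$, or has an induced restriction $M_0$, lying in an explicit family $\cU$ of ``generic'' even-plane matroids, with $\dim(M_0)\ge\chi(M)-a$ for an absolute constant $a$. I expect the members of $\cU$ to be assembled in a controlled, amalgam-like fashion from affine geometries and copies of $M(K_5)$; the role of Theorem~\ref{main1} is that forbidding an induced $M(K_5)$ removes the second kind of building block, which should collapse the structure to a bounded-rank perturbation of an affine geometry and hence to $\chi(M)\le 2$. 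The second ingredient I would extract from the structure theory is that the generic objects are \emph{universal} for $\cE_3$: there is an absolute constant $b$ such that every $U\in\cU$ with $\dim(U)\ge\dim(N)+b$ contains $N$ as an induced restriction, for every even-plane matroid $N$. Granting the dichotomy and universality, Theorem~\ref{main2} follows by contraposition, provided the constants in the structure theorem are tuned so that $a+b\le 5$ (and $c_0$ is small enough not to interfere --- which, since the statement is vacuous for very low-dimensional $N$, only requires $c_0$ to be a modest absolute constant): if $\chi(M)>\dim(N)+4$, then, being larger than $c_0$, $M$ falls in the second case and has an induced restriction $M_0\in\cU$ with $\dim(M_0)\ge\chi(M)-a\ge\dim(N)+b$, whence $M_0$, and therefore $M$, contains $N$ as an induced restriction --- contrary to hypothesis.

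For the structure theorem itself I would induct on dimension. Given $M=(E,G)\in\cM$, choose a flat $H$ of corank one and study how $M$ sits over $H$: the absence of an induced $I_3$ forces any three independent points of $E$ to have a fourth companion in their rank-$3$ span, and combining this with the absence of an induced $F_7$ should pin the ``link'' of each point of $E$ outside $H$ down to one of only a few rigid patterns relative to $E\cap H$. Propagating these local constraints, together with the inductive structure of $M|H$, one expects to conclude that either $E$ is essentially an affine geometry, or a single $M(K_5)$-pattern can be split off along a rank-$4$ flat so that the argument recurses --- which is what produces the amalgam description. Determining exactly which exceptional local patterns can occur, and verifying they are precisely the $M(K_5)$-based ones and not some sporadic family, is the main obstacle; this is also the step where the precise value of the additive constant in Theorem~\ref{main2} gets fixed.

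For the universality statement I would argue by an explicit embedding: given an even-plane matroid $N$ and a sufficiently high-dimensional $U\in\cU$, build an induced embedding $\varphi$ of $N$ into $U$ one basis vector at a time. At each step the image of the new basis vector must be chosen so that (i) the points of the ground set of $N$ are sent into that of $U$, and (ii) no further point of $U$ lies inside the image subspace; condition (ii) is the induced-ness requirement. Condition (ii) is exactly where the hypothesis $N\in\cE_3$ is used: the even-plane constraints on $N$ are consistent with the parity pattern built into the amalgam $U$, whereas an $N$ containing an odd plane would be forced to create an odd plane inside $U$, which the construction of $U$ forbids. Carrying the bookkeeping for (ii) across all relevant flats simultaneously --- that is, choosing the family $\cU$ robustly enough that this greedy embedding never gets stuck --- is the delicate point here, and the amount of rank slack it requires is exactly the source of the ``$+4$''.
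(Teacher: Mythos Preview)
Your high-level architecture --- a structural dichotomy for $(I_3,F_7)$-free matroids plus a universality statement for the structured class, combined by contraposition --- is exactly the paper's route. The differences are in how the two ingredients are formulated, and the paper's formulations are sharper than what you sketch.

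For the dichotomy, you posit that $M$ either has bounded $\chi$ or contains a large induced restriction $M_0$ drawn from some explicit ``generic'' subfamily $\cU\subseteq\cE_3$ built as amalgams of affine geometries and $K_5$'s. The paper instead proves (Theorem~\ref{maintech}) that a full-rank $(I_3,F_7)$-free $M$ is either \emph{itself} in $\cE_3$, or arises from $\AGpp(t-1,2)$ by doublings (whence $\chi(M)=2$). There is no need to pass to an induced restriction, and no need for a special subfamily $\cU$: the entire class $\cE_3$ is the structured outcome. The recursive description of $\cE_3$ that replaces your amalgam picture is that every nonempty member arises from a Bose--Burton geometry of order $1$ or $2$ by semidoublings (Theorem~\ref{e3structure}).

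For universality, you phrase the embedding criterion in terms of $\dim(U)$. That is the delicate point: dimension alone cannot suffice, since $\AG(n-1,2)\in\cE_3$ has arbitrarily large dimension but contains no induced $K_5$. The paper sidesteps this by using $\chi$ as the parameter: if $M,N\in\cE_3$ and $\chi(M)\ge\dim(N)+4$, then $M$ has an induced $N$-restriction (Theorem~\ref{hungry}). Because the structure theorem already puts $M$ itself in $\cE_3$, one may apply this with $M_0=M$, and the hypothesis is exactly on $\chi(M)$. The inductive proof of universality does proceed, as you suggest, by building the embedding one step at a time, but the induction is on $\dim(N)+\chi(N)$ and exploits the semidoubling structure on both sides rather than a greedy basis-vector extension. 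With these formulations the constants work out with no tuning: outcome~(\ref{mainoutcome1}) gives $\chi(M)=2$, and in outcome~(\ref{mainoutcome2}) the contrapositive of Theorem~\ref{hungry} gives $\chi(M)\le\dim(N)+3$.
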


This theorem is best-possible in the sense that $\chi$ is unbounded when no such $N$ is excluded, as the next theorem states. 

\begin{theorem}\label{gsfalse}
	For every integer $k$, there is a matroid $M \in \cE_3$ containing neither $I_3$ nor $F_7$ as an induced restriction such that $\chi(M) \ge k$. 
\end{theorem}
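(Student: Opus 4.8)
The plan is to produce, for each $k$, an explicit even plane matroid of dimension $2k$ whose critical number is at least $k$. The first step is to notice that the conditions ``no induced $I_3$'' and ``no induced $F_7$'' are automatic for members of $\cE_3$: both $I_3$ and $F_7 = \PG(2,2)$ are $3$-dimensional matroids, with ground sets of sizes $3$ and $7$ respectively, and $3$ and $7$ are odd, so no even plane matroid can contain either as an induced restriction. Hence it suffices to exhibit, for every $k$, a matroid $M\in\cE_3$ with $\chi(M)\ge k$. Naive attempts fail for instructive reasons — affine geometries have critical number $1$, and one checks that the critical number of a direct sum is the maximum of the two critical numbers — so the construction must be genuinely ``connected''.

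The matroid I would use is the set of non-singular points of a hyperbolic quadratic form. Fix $k\ge 1$, identify the points of $G\cong\PG(2k-1,2)$ with the nonzero vectors of $\bF_2^{2k}$, put $Q(x)=\sum_{i=1}^{k}x_{2i-1}x_{2i}$, and let $M=(E,G)$ with $E=\{v\in\bF_2^{2k}\setminus\{0\}:Q(v)=1\}$. To verify $M\in\cE_3$ it is enough to show $|E\cap W|$ is even for every subspace $W\le\bF_2^{2k}$ with $\dim W\ge 3$ (this covers the rank-deficient $3$-flats, which are the only mildly delicate case in the definition of $\cE_3$). Since $Q$ has degree $2$ and $Q(0)=0$, the sum $\sum_{v\in W}Q(v)$ is an iterated first difference of a degree-$2$ polynomial taken in $\dim W\ge 3$ directions, hence vanishes over $\bF_2$; therefore $|E\cap W|\equiv\sum_{v\in W}Q(v)\equiv 0\pmod 2$.

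It remains to show $\chi(M)\ge k$. Unwinding the definition, $\chi(M)\ge k$ is equivalent to the statement that no subspace of $\bF_2^{2k}$ of dimension $k+1$ (codimension $k-1$) avoids $E$, i.e.\ that every $(k+1)$-dimensional subspace $W$ contains some $v\ne 0$ with $Q(v)=1$. Suppose not; then $Q\equiv 0$ on $W$, and since $Q(u+w)+Q(u)+Q(w)=B(u,w)$ for the associated nondegenerate alternating bilinear form $B$ on $\bF_2^{2k}$, we get $B(u,w)=0$ for all $u,w\in W$, so $W\subseteq W^{\perp}$ and $\dim W\le k$, a contradiction. Hence $\chi(M)\ge k$, and letting $k\to\infty$ proves the theorem. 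The only real work is this last step — equivalently, the fact that a maximal totally singular subspace of a hyperbolic quadratic form on $\bF_2^{2k}$ has dimension exactly $k$; I do not expect a genuine obstacle here, the essential point being that a quadratic form is structured enough (degree $2$) to be even on every plane yet spread out enough that its non-singular locus meets every subspace of codimension $k-1$.
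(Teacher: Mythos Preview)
Your proof is correct and takes a genuinely different route from the paper. The paper derives Theorem~\ref{gsfalse} from Lemma~\ref{chibound}, whose proof is inductive: starting from $\PG(1,2)$, it repeatedly applies semidoublings and invokes Lemma~\ref{pushchi} to track the growth of $\chi$, producing for each $n$ a matroid in $\cE_3$ with $\chi = \floor{n/2}+1$. Your construction is instead a single explicit family --- the non-singular points of the hyperbolic quadratic form on $\bF_2^{2k}$ --- and your two verifications (the degree-$2$/third-difference argument for membership in $\cE_3$, and the totally-isotropic bound $\dim W \le k$ for the critical number) are self-contained and independent of the doubling/semidoubling machinery. The paper's approach dovetails with its structural programme and yields the sharp bound $\floor{n/2}+1$; yours gives $\chi \ge n/2$ in even dimension, one less than optimal, but is cleaner as a standalone proof of Theorem~\ref{gsfalse} and makes the link to classical quadric geometry transparent. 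It is worth noting that your examples are in fact obtainable by the paper's recipe as well --- each hyperbolic quadric on $\bF_2^{2k}$ arises from the one on $\bF_2^{2k-2}$ by a doubling followed by a semidoubling --- so the two constructions are more closely related than they first appear.
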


Theorems~\ref{main1} and~\ref{main2} are proved by means of an exact structure theorem for matroids with no induced $F_7$ or $I_3$-restrictions. The statement is slightly technical and requires two more definitions. If $M = (E,G)$ is an $n$-dimensional matroid, where $G$ is identified with $\bF_2^{n}-\{0\}$, then the \emph{doubling} of $M$ is the $(n+1)$-dimensional matroid $M' = (E',G')$, where $G'$ is identified with $\bF_2^{n+1}-\{0\}$ and $E' = \{0,1\} \times E$. Variants of this definition appear in many contexts in matroid theory; see \cite{ow} and \cite{whittle}, for example. We will see in Lemma~\ref{doublinggood} that doubling does not change critical number, nor the presence of $I_3$ or $F_7$ as induced restrictions. For an integer $t \ge 2$, write $\AGpp(t-1,2)$ for the matroid $((G-H) \cup \{x\},G)$, where $G \cong \PG(t-1,2)$ and $x$ is an element of a hyperplane $H$ of $G$. 

\begin{theorem}\label{maintech}
	If $M$ is a full-rank matroid, then $M$ does not have $I_3$ or $F_7$ as an induced restriction if and only if either
	\begin{itemize}
		\item $M \in \cE_3$, or
		\item there is some $t \ge 3$ such that $M$ arises from $\AGpp(t-1,2)$ by a sequence of doublings. 
	\end{itemize}
\end{theorem}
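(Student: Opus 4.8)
\emph{The forward (``if'') direction} is routine. If $M\in\cE_3$, then any induced $I_3$- or $F_7$-restriction would be a $3$-dimensional induced restriction with ground set of size $3$ or $7$, both odd, contradicting membership in $\cE_3$. If $M$ arises from $\AGpp(t-1,2)$ by doublings, then by Lemma~\ref{doublinggood} it suffices to see that $\AGpp(t-1,2)$ itself has no induced $I_3$ or $F_7$; but for a Fano flat $P$ of $\PG(t-1,2)$ either $P$ lies in the hyperplane $H$, so $|E\cap P|\le 1$, or $P\cap H$ is a line of $P$ and $E\cap P$ consists of the four points of $P\setminus H$ together with possibly the point $x$, so $|E\cap P|\in\{4,5\}$; in no case is $E\cap P$ all of $P$ or a non-collinear triple. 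I phrase the rest via the indicator function $\chi_E\colon\bF_2^n\to\bF_2$ of $E$ (with $\chi_E(0)=0$), calling a line $L=\{a,b,a+b\}$ \emph{odd} when $\chi_E(a)+\chi_E(b)+\chi_E(a+b)=1$. Two reformulations drive everything: $M$ is a doubling if and only if $\chi_E$ has a nonzero period $z$ (and such a $z$ automatically lies off $E$ and on no odd line); and $M\cong\AGpp(n-1,2)$ if and only if $\chi_E=\ell+\delta_x$ for a nonzero linear functional $\ell$ and a point $x$, which — since $\sum_{v\in L}(\ell+\delta_x)(v)=[x\in L]$ — happens exactly when the odd lines of $M$ are precisely the lines through $x$; and when $M\in\cM$ this forces $\ell(x)=0$, since otherwise $E=(G\setminus\ker\ell)\setminus\{x\}$, which has an induced $I_3$.

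Two observations make the reverse direction an induction on $n=\dim M$. First, $M\in\cE_3$ if and only if every $3$-dimensional flat meets $E$ in an even set: taking a minimal-dimensional flat $F$ with $\dim F\ge 3$ and $|E\cap F|$ odd, if $\dim F\ge 4$ then every hyperplane of $F$ meets $E$ evenly, and summing over the odd number of hyperplanes of $F$ (each point of $F$ lying in an odd number of them) forces $|E\cap F|$ even, a contradiction. Second, the doubling of an $\cE_3$-matroid is again in $\cE_3$: projecting along the doubling direction, a flat either contains that direction and so meets $E$ in a union of pairs, or maps isomorphically onto a flat of the smaller matroid of the same dimension. For the induction, if $n\le 2$ then $M\in\cE_3$ vacuously; if $n=3$ then a full-rank $M\in\cM$ has $|E|\in\{4,5,6\}$ (smaller sizes fail full-rankness or create an induced $I_3$, size $7$ is $F_7$), $M\in\cE_3$ when $|E|$ is even, and $M\cong\AGpp(2,2)$ when $|E|=5$ by transitivity of $\mathrm{GL}(3,2)$ on $2$-element subsets of $\PG(2,2)$. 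For $n\ge 4$, assuming the statement below $n$: if $M\in\cE_3$ we are done, and otherwise it suffices, by the main lemma below, to know $M$ is a doubling or $M\cong\AGpp(n-1,2)$; in the first case the smaller matroid is full-rank, lies in $\cM$ by Lemma~\ref{doublinggood}, and is not in $\cE_3$ by the second observation, so by induction it, hence $M$, arises from some $\AGpp(t-1,2)$ by doublings.

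\emph{Main lemma: a full-rank $M\in\cM$ with $\dim M=n\ge 4$ and $M\notin\cE_3$ is a doubling or is isomorphic to $\AGpp(n-1,2)$.} By the first observation there is a $3$-flat $P^*$ with $|E\cap P^*|$ odd; since $M\in\cM$, in fact $|E\cap P^*|\in\{1,3,5\}$ with the size-$3$ case collinear, and a short analysis of $\PG(2,2)$ shows the odd lines of $P^*$ are either the three lines through a single point $x^*$ (sizes $1$ and $5$) or all seven lines of $P^*$ (size $3$). The crucial step is to eliminate the ``anti-periodic'' configuration: if some point $z$ lies on no odd line, then all lines through $z$ are even, so $\chi_E(v+z)=\chi_E(v)+\chi_E(z)$ for all $v$; if $z\notin E$ this exhibits $z$ as a nonzero period and $M$ is a doubling, while if $z\in E$ one reaches a contradiction by forming the $4$-flat $Q=\langle z,P^*\rangle$ (note $z\notin P^*$, as every point of $P^*$ lies on an odd line of $P^*$) and choosing a $3$-flat $R\subseteq Q$ with $z\notin R$ and $R\cap P^*$ a suitably chosen line $L$ of $P^*$: there $R\setminus P^*=z+(P^*\setminus L)$ and $\chi_E(z+p)=1-\chi_E(p)$, so $|E\cap R|=|E\cap L|+|(P^*\setminus L)\setminus E|$, and a good choice of $L$ makes $E\cap R$ all of $R$ or a non-collinear triple, contradicting $M\in\cM$. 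Hence, once $M$ is not a doubling, \emph{every} point of $G$ lies on an odd line. One then upgrades this — using the local structure at $P^*$ and repeatedly invoking that $E$ meets no Fano flat in its whole point set or in a non-collinear triple — to the conclusion that all odd lines of $M$ pass through one common point $x$; by the second reformulation this gives $\chi_E=\ell+\delta_x$ with $\ell(x)=0$, so $M\cong\AGpp(n-1,2)$.

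The main obstacle is this last ``upgrade'': passing from ``every point lies on an odd line, and there is one odd $3$-flat with a distinguished point'' to global concurrence of all odd lines. This needs a careful propagation argument controlling, for a plane $P$ containing two odd lines, which of the four possible odd-line counts ($0$, $3$, $4$, $7$) $P$ can realise, while systematically using the forbidden Fano configurations; it must also dispose of the residual possibility that $P^*$ is of the seven-odd-line type, by producing from it a single-point odd $3$-flat through further $4$-flat arguments like the one above. I expect this case analysis, rather than any single idea, to carry the weight of the proof.
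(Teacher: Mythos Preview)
Your reductions are correct and well-organized: the forward direction, the period/anti-period dichotomy, the characterization of $\AGpp(n-1,2)$ via concurrent odd lines, and the two $\cE_3$ observations are all fine, and your elimination of the anti-periodic case (a point $z\in E$ on no odd line) via the $4$-flat $\langle z,P^*\rangle$ checks out in all three parity types of $P^*$. But as you yourself say, the proof is not finished: the ``upgrade'' from \emph{every point lies on an odd line} to \emph{all odd lines pass through a single point} is precisely the content of the theorem in the non-$\cE_3$ case, and you have not supplied an argument. Hoping that a plane-by-plane analysis of the possible odd-line patterns ($0$, $3$ concurrent, $4$, or $7$) will propagate to global concurrence is optimistic; nothing you have written forces two odd lines in different planes to meet, and the all-seven case does not obviously reduce to the three-concurrent case by the kind of $4$-flat trick you used earlier.

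The paper avoids this global incidence problem altogether. Instead of tracking all odd lines, it fixes one nonelement $w$ (the third point of a triangle meeting $E$ twice, chosen inside an induced $P_5$ if one exists), picks a hyperplane $H\not\ni w$ with $M|H$ full-rank, and partitions $H$ into $B_i=\{x\in H:|\{x,x+w\}\cap E|=i\}$. In your language $B_1$ is exactly the set of $x\in H$ whose line to $w$ is odd. The technical core (Lemma~\ref{technical}) is that if $B_1,B_2\ne\varnothing$ then one of $(B_1,H),(B_2,H)$ is $\AG(n-2,2)$; its proof is a short chain of claims about triangles and planes of $H$ meeting the $B_i$, much more local than global concurrence. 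From there: $B_1=\varnothing$ gives a doubling; $(B_2,H)\cong\AG(n-2,2)$ gives a spanning $\AG(n-1,2)$-restriction, and a separate easy lemma (Lemma~\ref{recogniseagplus}) then forces $M$ to be $\AGpp$ or a doubling; $(B_1,H)\cong\AG(n-2,2)$ means $M$ is a \emph{semidoubling} of $M|H$, the careful choice of $w$ rules out any induced $P_5$, and induction on $M|H$ (which cannot land in the $\AGpp$ outcome, since $\AGpp(t-1,2)$ contains $P_5$) gives $M\in\cE_3$.

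So the paper trades your global statement ``all odd lines concur'' for the much weaker, directional statement ``the odd lines through one fixed $w$ cut out an affine hyperplane of $H$ (or $B_2$ does)'', and recovers the rest by induction and the semidoubling machinery. If you want to push your approach through, you would need an analogue of Lemma~\ref{technical}; I do not see how to extract it from odd-line counts in planes alone.
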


The first outcome will be substantially refined by Theorem~\ref{e3structure}, which is too technical to state here, but will give a description of all matroids in $\cE_3$ in terms of a decomposition.

\subsection*{Graph Theory}

Our work is motivated by the theory of induced subgraphs. In our setting, matroids of the form $I_t$ are maximal subject to containing no `circuits', so, despite being much simpler objects than trees, they play the role that trees do in graph theory. Our title borrows some graph terminology by referring to $I_3$ as a \emph{claw} (the automorphism group of $I_3$ acts symmetrically on its elements, just as that of $K_{1,3}$ acts symmetrically on its edges). Projective geometries correspond to cliques in the sense of being maximally dense and providing a trivial lower bound on critical number.  (We have $\chi(\PG(t-1,2)) = t$ and so any matroid containing $\PG(t-1,2)$ must have $\chi \ge t$.) The following celebrated graph theory conjecture states that excluding a clique and an induced tree bounds the chromatic number. 

\begin{conjecture}[{Gy\'arf\'as-Sumner  \cite{g85},\cite{s81}}]\label{gsc}
	For every tree $T$ and complete graph $K$, there is an integer $c$ such that if $G$ is a graph containing neither $T$ nor $K$ as an induced subgraph, then $\chi(G) \le c$. 
\end{conjecture}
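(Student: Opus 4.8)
\emph{The statement is the Gy\'arf\'as--Sumner conjecture, which has been open for roughly four decades}, so what follows is a plan of attack along the lines that have produced all the known partial results, not a route I expect to carry through to the end. Fix the tree $T$ and the complete graph $K = K_t$. The plan is to induct on $|V(T)|$, proving the existence of a bound $c(T,t)$ such that every $K$-free graph $G$ with $\chi(G) > c(T,t)$ contains $T$ as an induced subgraph. For $|V(T)| \le 2$ this is immediate. For the inductive step, fix a leaf $\ell$ of $T$, let $v$ be its neighbour, and set $T' = T - \ell$; by induction we have a bound $c(T',t)$, and the goal is to produce an induced copy of $T'$ \emph{together with} an extra vertex playing the role of $\ell$, adjacent to the image of $v$ and to nothing else already chosen.

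The engine would be a Gy\'arf\'as-path / levelling argument. Starting from a subset of $G$ whose chromatic number is still enormous, run a breadth-first search to produce levels $L_0, L_1, L_2, \dots$ such that, for each $i$, the subgraph induced on $L_i \cup L_{i+1} \cup \cdots$ has chromatic number at least $\chi(G)$ minus a bounded quantity, and such that each vertex of $L_{i+1}$ has bounded-radius ``reachback'' into the earlier levels. Since $K$ is excluded, one can arrange, after shrinking to a still-high-chromatic subgraph via Ramsey-type pruning, that the connections between a vertex and the level a few steps below it are controlled. One then embeds $T'$ inside a bounded window $L_i \cup \cdots \cup L_{i+d}$ of consecutive levels --- legitimate because that window still has large chromatic number, so the inductive hypothesis applies --- routing the branches of $T'$ emanating from $v$ into distinct families of levels so that different branches land in far-apart parts of the search tree, hence are anticomplete; the image of $v$ then sits at the deepest level used, and a still-later level, still of large chromatic number, supplies a vertex adjacent to that image but to nothing else, which serves as $\ell$.

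The real obstacle --- and the reason this is hard rather than routine --- is enforcing that the embedding is \emph{induced}: realising the edges of $T$ is comparatively easy, but forbidding \emph{every} non-edge, in particular between vertices placed in different branches of $T$, requires controlling non-adjacency at unbounded distance in the host graph, which the levelling argument does not deliver. Making this work in general would require a genuinely new structural description of $K$-free graphs of large chromatic number --- roughly, that such a graph contains a large induced subgraph with a tree-like decomposition whose separations are small and whose pieces can be made mutually anticomplete --- which is not presently available. In its absence, the realistic deliverable from this plan is the conjecture for the next tractable class of trees (for instance, trees obtained from a tree of radius two by subdividing each edge a bounded number of times, extending the Kierstead--Penrice theorem), via a bounded iteration of the window-plus-extra-leaf step. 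I note finally that the matroid analysis of this paper shows the naive analogue \emph{fails} in the binary-matroid world (Theorem~\ref{gsfalse}), so there is no shortcut by transfer: the graph conjecture must be confronted on its own terms, which is precisely what Theorems~\ref{main1} and~\ref{maintech} accomplish in the restricted matroid setting.
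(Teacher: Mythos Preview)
The paper does not prove this statement: Conjecture~\ref{gsc} is presented there as an open problem, and the surrounding text explicitly says it ``remains open in general despite being known to hold for many different $T$.'' You recognise this from the outset, so there is nothing to compare your proposal against---there is no proof in the paper.

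Your sketch of the Gy\'arf\'as-path/levelling approach is a fair summary of the standard machinery behind the known partial results, and your identification of the obstruction (controlling \emph{all} non-edges of $T$, not just the edges) is exactly the sticking point. One small correction to your final sentence: Theorems~\ref{main1} and~\ref{maintech} do not ``accomplish'' the conjecture in a restricted matroid setting; rather, Theorem~\ref{gsfalse} shows the naive matroid analogue is \emph{false} even when the excluded objects are $I_3$ and $F_7$, and Theorems~\ref{main1}--\ref{maintech} recover a bounded-$\chi$ statement only after excluding an \emph{additional} matroid from $\cE_3$. So the paper's message is that the matroid version genuinely fails and requires extra hypotheses, which reinforces rather than bypasses the difficulty you identify for graphs.
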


Using the analogies discussed above, one could easily formulate a matroidal version of this conjecture, asserting that $\chi$ should be bounded by a function of $s$ and $t$ whenever we exclude a matroid $I_s$ and a projective geometry $\PG(t-1,2)$ as induced restrictions. Howeover, Theorem~\ref{gsfalse} refutes such a statement even in the seemingly innocuous case when $s=t=3$. The analogous case of Conjecture~\ref{gsc} is when $T$ and $K$ both have four vertices, a special case proved in \cite{g85}. However, Conjecture~\ref{gsc} itself remains open in general despite being known to hold for many different $T$, (\cite{g85} contains a proof when $T$ is a path or star, and \cite{css17} discusses many more resolved cases). Its failure in the setting of matroids is surprising. Despite this, we conjecture that when $t = 2$, we do recover a bound on $\chi$.

\begin{conjecture}
	For all $s \ge 1$ there exists an integer $k$ such that if $M$ is a simple binary matroid with no induced $I_s$-restriction or  $\PG(1,2)$-restriction, then $\chi(M) \le k$. 
\end{conjecture}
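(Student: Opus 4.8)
The plan is to induct on $s$, using throughout the following dictionary. Writing $G$ as $\bF_2^{n}\setminus\{0\}$, the condition $\chi(M)\le k$ says precisely that $E$ can be covered by $k$ affine hyperplanes of $\bF_2^{n}$ avoiding the origin (equivalently, that the common kernel of $k$ suitable linear functionals meets $E$ only in $0$), while excluding $\PG(1,2)$ as an induced restriction says precisely that $E$ is \emph{sum-free}: $a+b\notin E$ for all $a,b\in E$. Deleting the coordinates outside the span of $E$ changes $\chi$ by at most $1$ and preserves both hypotheses, so we may assume $M$ is full-rank. A line of $F_7$ and a triangle of $M(K_5)$ each give three collinear points, so a sum-free matroid automatically excludes $F_7$ and $K_5$ as induced restrictions; hence the case $s=3$ is exactly Theorem~\ref{main1}, giving $\chi(M)\le 2$, and the cases $s\le 2$ are trivial. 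Fix $s\ge 4$ and assume the result for $s-1$ with a constant $k(s-1)$.

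If $M$ has no induced $I_{s-1}$-restriction we are done, with $\chi(M)\le k(s-1)$; so assume $M$ has an induced $I_{s-1}$ on a flat $F_0$ of dimension $s-1$, with $E\cap F_0=\{v_1,\dots,v_{s-1}\}$ a basis of the subspace $W:=\langle v_1,\dots,v_{s-1}\rangle$. Excluding $I_s$ forces a local density property: for every $v\in E$ with $v\notin F_0$, the coset $v+W$ contains a second point of $E$ --- otherwise $E\cap\langle F_0\cup\{v\}\rangle$ would equal $\{v_1,\dots,v_{s-1},v\}$, an induced $I_s$-restriction. Let $\pi\colon\bF_2^{n}\to\bF_2^{n}/W$ be the quotient and set $\bar M=(\pi(E)\setminus\{0\},\PG(n-s,2))$; then every nonzero fibre of $\pi$ that meets $E$ meets it at least twice. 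Moreover, taking a vector-space complement of $W$ inside the preimage of a codimension-$\chi(\bar M)$ subspace witnessing $\chi(\bar M)$ yields a subspace of $\bF_2^{n}$ of codimension $\chi(\bar M)+(s-1)$ disjoint from $E$, whence $\chi(M)\le\chi(\bar M)+(s-1)$. It therefore suffices to bound $\chi(\bar M)$ in terms of $s$.

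The obstacle --- and the reason this remains a conjecture --- is that $\bar M$ need not be sum-free: the quotient by $W$ can push a non-collinear triple of $E$ onto a line, so the inductive hypothesis does not apply to $\bar M$, and $\bar M$ need not have bounded dimension either. The intended remedy is to exploit the fibre structure more carefully: each occupied fibre lies in an affine copy of $\bF_2^{s-1}$, and the no-$I_s$ condition imposes, beyond the doubling of each occupied fibre, compatibility constraints among the fibres; the aim is to sort the occupied fibres into boundedly many ``shapes'' and to cover $E$ one shape at a time by globally defined affine hyperplanes. I expect converting this purely local information into a genuine global covering to be the crux of the argument. A cleaner route, if it can be found, would be a structure theorem for sum-free matroids with no induced $I_s$ extending Theorem~\ref{maintech} past the $I_3$-free regime --- for instance, that every such $M$ either lies in $\cE_3$, or is obtained by a sequence of doublings from one of dimension bounded in terms of $s$, or has a flat of bounded codimension on which it restricts with no induced $I_{s-1}$. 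By Theorem~\ref{main1}, by the fact that doubling preserves critical number and the relevant excluded restrictions (Lemma~\ref{doublinggood} and its analogue for $I_s$), and by the inductive hypothesis respectively, each of these three outcomes bounds $\chi(M)$ by a constant depending only on $s$; proving the trichotomy itself, presumably by refining the connectivity and closure arguments behind Theorem~\ref{e3structure}, would complete the induction.
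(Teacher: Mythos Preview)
This statement is a \emph{conjecture}, not a theorem; the paper does not prove it for $s\ge 4$, and explicitly says that new ideas will be needed in that range. The paper's only contribution toward it is the case $s\le 3$, handled by Corollary~\ref{recag2}: a triangle-free matroid with no induced $I_3$-restriction has $(E,\cl(E))$ an affine geometry, so $\chi(M)\le 1$. Your route to $s=3$ via Theorem~\ref{main1} is valid (triangle-free indeed kills $F_7$ and $K_5$), but it is a detour yielding only $\chi\le 2$ where the direct argument gives $\chi\le 1$.

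For $s\ge 4$, your write-up is not a proof and you say so yourself: the quotient $\bar M$ need not be triangle-free, so the induction does not close, and the proposed trichotomy (``$M\in\cE_3$, or bounded doublings, or a bounded-codimension flat with no induced $I_{s-1}$'') is asserted without argument. That is the genuine gap. A couple of smaller points: passing to $(E,\cl(E))$ leaves $\chi$ \emph{unchanged}, not merely within $1$; and in your trichotomy, the first outcome is better justified by Corollary~\ref{recag2} than by Theorem~\ref{main1}, since $M\in\cE_3$ already forces no induced $I_3$ (the size $3$ is odd), and together with triangle-freeness this gives $\chi\le 1$ directly. None of this rescues the main difficulty: the conjecture is open, and your outline does not cross the barrier the paper identifies.
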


This conjecture is easy to prove for $s \in \{1,2,3\}$ -- we show in Corollary~\ref{recag2}  that $k = 1$ will do --  but will require new ideas for larger $s$. Its graph-theoretic analogue, where one excludes an induced tree $T$ and a triangle, is easy when $T = K_{1,t}$ (as the hypotheses impose a bound on maximum degree, which in turns bounds chromatic number), but remains open in general.

\subsection*{Even matroids} 

The class $\cE_3$, of matroids whose ($\ge 3$)-dimensional induced restrictions all have even size, arose for us in a negative way, as a source of counterexamples to an attempted proof of the analogue of Conjecture~\ref{gsc}. However, this class and related ones turn out to enjoy properties that we believe to be of substantial interest, and a good amount of our work concerns these classes for their own sake. We discuss our results, which we prove in Sections~\ref{evensection} and~\ref{evenplanesection}, briefly here. 

While the criterion for membership in $\cE_3$ is restrictive, the class appears relatively rich. It is not hard to prove (see Theorem~\ref{evenstructure}) that the definition is equivalent to the assertion that the induced restrictions of dimension \emph{exactly} $3$ all have even size. We show in Lemma~\ref{chibound} that $\cE_3$ is infinite, and that, in fact, its members can have critical number up to roughly half their dimension. We show as well that $\cE_3$ has the curious property of being an inclusion-minimal class that is closed under induced restrictions for which $\chi$ is unbounded. It seems difficult to construct a nontrivial class of graphs having an analogous property. 

\begin{theorem}\label{omnivorous}
	For every proper subclass $\cM$ of $\cE_3$ that is closed under induced restrictions, there exists $k$ so that $\chi(M) \le k$ for all $M \in\cM$.
\end{theorem}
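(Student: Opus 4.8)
The plan is to obtain Theorem~\ref{omnivorous} as a direct consequence of Theorem~\ref{main2}, the point being that membership in $\cE_3$ already forbids induced $I_3$- and $F_7$-restrictions, so forbidding just \emph{one} additional matroid from $\cE_3$ suffices to invoke Theorem~\ref{main2}.

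First I would record the routine fact that $\cE_3$ is closed under induced restrictions: an induced restriction of an induced restriction of a matroid $M$ is again an induced restriction of $M$, so if every induced restriction of $M$ of dimension at least $3$ has even size, the same holds for every induced restriction of $M$. Next, since $I_3$ has dimension $3$ and ground set of size $3$, and $F_7 = \PG(2,2)$ has dimension $3$ and ground set of size $7$, neither $I_3$ nor $F_7$ belongs to $\cE_3$; combined with the previous observation, no matroid in $\cE_3$ contains $I_3$ or $F_7$ as an induced restriction. Now let $\cM$ be a proper subclass of $\cE_3$ that is closed under induced restrictions. Since $\cM \neq \cE_3$ (and $\cE_3$ is nonempty, as it contains all matroids of dimension at most $2$), we may fix a matroid $N_0 \in \cE_3 \setminus \cM$. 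If some $M \in \cM$ contained $N_0$ as an induced restriction, then closure under induced restrictions would force $N_0 \in \cM$, a contradiction; hence no member of $\cM$ contains $N_0$ as an induced restriction. Therefore every $M \in \cM$ contains none of $I_3$, $F_7$, $N_0$ as an induced restriction, and since $N_0 \in \cE_3$, Theorem~\ref{main2} gives $\chi(M) \le \dim(N_0) + 4$. Taking $k = \dim(N_0) + 4$ finishes the proof (the case $\cM = \emptyset$ being vacuous).

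There is essentially no obstacle here beyond correct bookkeeping, as all the difficulty has been front-loaded into Theorem~\ref{main2}: Theorem~\ref{omnivorous} is precisely the \emph{minimality} half of the assertion that $\cE_3$ is an inclusion-minimal induced-restriction-closed class with unbounded $\chi$, the \emph{unboundedness} half being supplied separately by Lemma~\ref{chibound}. The only subtle point requiring care is that ``closed under induced restrictions'' is closure up to isomorphism, so that ``$M$ contains $N_0$ as an induced restriction'' genuinely forces $N_0 \in \cM$; and one should double-check the elementary computations $|E(I_3)| = 3$ and $|E(F_7)| = 7$ that place $I_3, F_7 \notin \cE_3$, since this is what makes the $\cE_3$ hypothesis strong enough to apply Theorem~\ref{main2} with a single extra excluded matroid.
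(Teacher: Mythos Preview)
Your argument is correct and essentially matches the paper's: the paper derives Theorem~\ref{omnivorous} directly from Theorem~\ref{hungry} (if $M,N\in\cE_3$ and $\chi(M)\ge\dim(N)+4$ then $M$ has an induced $N$-restriction), whereas you cite Theorem~\ref{main2}, but since your $M$ already lies in $\cE_3$, Theorem~\ref{main2} specialises to exactly Theorem~\ref{hungry} and the two routes coincide.
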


We also define and consider generalizations of $\cE_3$; for each integer $k$, let $\cE_k$ denote the class of matroids whose induced restrictions of dimension at least $k$ all have even size. We will show, as we did with $\cE_3$, that the `at least' in the definition can be replaced with `exactly', and will give in Theorem~\ref{evenstructure} a characterisation of $\cE_k$ in terms of an iterated construction using matroids in $\cE_{k-1}$; this construction is particularly pleasant for $\cE_3$. For all the above reasons, we believe that the classes $\cE_k$ (as well as the classes $\cO_k$ of complements of matroids in $\cE_k$, comprising the matroids whose $k$-dimensional induced restrictions all have \emph{odd} size), will play an important role in the theory of induced restrictions in matroids. In the vein of Theorems~\ref{gsfalse} and~\ref{maintech}, we expect them to appear as basic classes in structure theorems, and, since they have unbounded critical number but omit small projective geometries and many other natural matroids, that they will be a good source of counterexamples.

\section{Preliminaries} 

Our matroid terminology is mostly standard (see, for example, \cite{oxley}), with a few adaptations to suit our adjusted formalism. Let $M = (E,G)$ be a matroid.  We call $E$ the \emph{ground set} of $G$. We occasionally write $E(M)$ for $E$ and $G(M)$ for $G$ where these sets have not yet been defined. The \emph{size} of $M$ is $|E|$, and $M$ is \emph{empty} if its size is zero. We do not refer to a `flat' of a matroid $M$ itself, only to a \emph{flat of $G$}, which simply means a projective geometry contained in $G$. Each flat $F$ has a \emph{dimension}, denoted $\dim(F)$, which is its dimension as a projective geometry, and a \emph{codimension} $\dim(G)-\dim(F)$ in $G$. The empty set is a flat of dimension zero. Flats of dimension $2,3$ and $\dim(G)-1$ are called \emph{triangles}, \emph{planes} and \emph{hyperplanes} respectively. A \emph{triangle-free} matroid is one whose ground set contains no triangles, or equivalently one with no $\PG(1,2)$-restriction.

For a flat $F$ of $G$, we write $M|F$ for the induced restriction $(E \cap F,F)$ of $M$. The \emph{closure} of a set $X \subseteq G$, written $\cl(X)$, is the unique minimal flat containing $X$, and the \emph{rank} of $X$, written $r(X)$, is the dimension of this flat. The rank of $M$ itself is defined by $r(M) = r(E)$; note that $r(M) \le \dim(M)$, recalling that $\dim(M)$ is defined to be $\dim(G)$.  If $r(M) = \dim(M)$ then we say $M$ is \emph{full-rank}. A \emph{basis} of $M$ is a minimal subset of $E$ whose rank is $r(M)$.  

It is often helpful to think of the elements of $G \cong \PG(n-1,2)$ as the nonzero vectors in $\bF_2^n$. In this way, we can fix a given basis of $G$ to correspond to the set of standard basis vectors, and the flats of $G$ correspond to the subspaces of $\bF_2^n$ with the zero element removed. In particular, a triangle of $G$ is a triple $\{x,y,z\}$ of distinct nonzero vectors with sum zero. Evoking this notationally, for elements $x$ and $y$ of $G \cup \{0\} = \bF_2^n$, we write $x + y$ for the  element corresponding to the vector sum of $x$ and $y$. Note that if $x \ne y$ then $x+y \ne 0$ is the third element in the triangle containing $x$ and $y$. If $F$ is a flat of $G$ and $w \in G-F$, then $\cl(F \cup \{w\})$ is the disjoint union of the three sets $\{w\},F$ and $F + w = \{x+w\colon x \in F\}$.  If $F$ is a hyperplane, then these three sets partition $G$. 

Still thinking of elements as vectors, we define a \emph{circuit} of $G$ as a minimal set of its elements $G$ whose sum is zero. Note that a triangle is a $3$-element circuit.

The \emph{complement} of $M$ is the matroid $\overline{M} = (G-E,G)$. For $n \ge k \ge 0$, the \emph{$n$-dimensional Bose-Burton Geometry of order $k$}, denoted $\BB(n-1,2;k)$, is the matroid $(G-F,G)$, where $F$ is a $k$-codimensional flat of $G \cong \PG(n-1,2)$. These are named for the authors of \cite{bb}, which proves they are the unique densest matroids omitting a $\PG(k,2)$-restriction. A Bose-Burton geometry of order zero is empty; one of order $1$ is an \emph{affine geometry}, for which we write $\AG(n-1,2)$, and one of order $n$ is a projective geometry. 
Bose-Burton and affine geometries have the following easy characterisation, which is implicit in \cite{bb}. 
\begin{lemma}\label{recognisebb}
	If $M = (E,G)$ is a matroid such that no triangle $T$ of $G$ satisfies $|T \cap E| = 1$, then $M$ is a Bose-Burton geometry. If, additionally, no triangle $T$ of $G$ is contained in $E$, then $M$ is either empty or an affine geometry. 
\end{lemma}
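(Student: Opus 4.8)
The plan is to translate everything into the vector-space model, identifying $G$ with $\bF_2^n - \{0\}$ where $n = \dim(M)$. Then $M = (E,G)$ is a Bose--Burton geometry exactly when $F := G - E$ is a flat of $G$, i.e.\ when $S := (G-E)\cup\{0\}$ is a linear subspace of $\bF_2^n$; as we are in characteristic $2$ and $0 \in S$, this is equivalent to $S$ being closed under addition. So the first statement reduces to showing $x,y \in S \Rightarrow x+y \in S$.

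First I would verify this. If $x = y$ then $x+y = 0 \in S$, and if one of $x,y$ is $0$ then $x+y \in S$ trivially, so suppose $x \ne y$ are nonzero; then $x,y \in G-E$ and $T := \{x,y,x+y\}$ is a triangle of $G$ with $x,y \notin E$, hence $|T\cap E| \le 1$. The hypothesis forbids $|T\cap E| = 1$, so $x+y \notin E$ and thus $x+y \in S$. This shows $F = G-E$ is a flat, so $M = (G-F,G)$ is the Bose--Burton geometry of order $\codim(F)$.

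For the second statement, assuming in addition that no triangle is contained in $E$, I would show $\codim(F) \le 1$. Suppose instead $\codim(F) \ge 2$ and fix $w \in G-F$. The flat $\cl(F\cup\{w\})$ has dimension $\dim(F)+1 < \dim(G)$, so it is proper and we can choose $v \in G - \cl(F\cup\{w\})$. Since $\cl(F\cup\{w\})$ is the disjoint union of $\{w\}$, $F$, and $F+w$ (as noted in the preliminaries), the elements $v,w,v+w$ are distinct and none lies in $F$ — in particular $v+w \notin F$, as otherwise $v \in F+w \subseteq \cl(F\cup\{w\})$. Hence $\{v,w,v+w\}$ is a triangle contained in $G-F = E$, contradicting the hypothesis. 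So $F$ has codimension $0$, whence $M$ is empty, or codimension $1$, whence $E = G-F$ is the affine geometry $\AG(n-1,2)$.

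I do not anticipate a genuine obstacle: the proof is short and elementary. The only points requiring a little care are the flat/subspace dictionary — in particular that $\cl(F\cup\{w\})$ has dimension exactly one more than $F$, so that it remains a proper flat precisely when $\codim(F) \ge 2$ — and the routine check that the triangle produced in the second part consists of three distinct elements, all avoiding $F$.
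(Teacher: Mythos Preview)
Your proof is correct and follows essentially the same approach as the paper: show that $(G-E)\cup\{0\}$ is closed under addition (hence a subspace) using the triangle hypothesis, and then argue that codimension at least $2$ forces a triangle inside $E$. The only difference is that you spell out the construction of such a triangle explicitly, whereas the paper simply asserts that $E$ ``clearly contains a triangle'' when $\codim(G-E)\ge 2$.
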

\begin{proof}
	The condition given implies that for all $x,y \in G-E$ we have $x + y \in G-E$. Viewing the elements of $G$ as the nonzero vectors in $\bF_2^n$, we see that $(G-E) \cup \{0\}$ is a subspace, so $G-E$ is a flat of $G$ and thus $M$ is a Bose-Burton geometry. If $G-E$ has codimension at least $2$, then $E$ clearly contains a triangle of $G$; the second part follows. 
\end{proof}

We freely use the this, as well as its  complementary statement, which implies that $(E,\cl(E))$ is a projective geometry for any matroid $(E,G)$ where no triangle of $G$ contains exactly two elements of $E$. 

\subsection*{Critical Number}
Recall that the \emph{critical number} $\chi(M)$ is the minimum codimension of a projective geometry restriction of $\overline{M}$. This parameter plays the role of chromatic number. We trivially have $\chi(\BB(n-1,2;k)) = k$, and in fact, $\chi(M)$ for a general matroid $M$ is just the smallest order of a Bose-Burton geometry containing $M$ as a restriction; thus, Bose-Burton geometries are analogous to complete multipartite graphs. 

We freely use a few easy facts about $\chi$; for example, $\chi(M') \le \chi(M)$ for any restriction $M'$ of $M$; we have $\chi(M) \le r(M)$ for all $M$; and if $M_i = (E_i,G)$ for $i \in \{1,2\}$ while $M = (E_1 \cup E_2,G)$, then $\chi(M) \le \chi(M_1) + \chi(M_2)$. This gives that adding an element to a matroid increases its critical number by at most $1$, and since the complement of a hyperplane has critical number at most $1$, we have $\chi(M|H) \le \chi(M) \le \chi(M|H) + 1$ whenever $M = (E,G)$ and $H$ is a hyperplane of $G$.

\subsection*{Doublings}

Let $M = (E,G)$ be a matroid, let $H$ be a hyperplane of $G$, and let $w \in G-H$. We say that $M$ is the \emph{doubling of $M|H$ by $w$} if $w \notin E$ and $E = (E \cap H) \cup (w + (E \cap H))$. This condition is equivalent to the statement that $w \in G-(E \cup H)$ while every triangle $T$ of $G$ containing $w$ satisfies $|T \cap E| \in \{0,2\}$ (In fact, if this condition holds then $M$ is the doubling of $M|H'$ for \emph{every} hyperplane $H'$ not containing $w$). Note that this definition agrees with the more concrete one given in the introduction. 

It is easy to check that, for a matroid $M_0$, any two doublings of $M_0$ are isomorphic. When $w$ is unimportant, we refer to $M$ just as the \emph{doubling} of $M_0$. Doublings preserve the property of being a Bose-Burton geometry; the doubling of $\BB(n-1,2;k)$ is $\BB(n,2;k)$. The following lemma shows that they also preserve critical number and the absence of most fixed induced restrictions.

\begin{lemma}\label{doublinggood}
	Let $M$ be the doubling of a matroid $M_0$. Then
	\begin{itemize}
		\item $\chi(M) = \chi(M_0)$, and 
		\item if $N$ is a matroid that is not a doubling of another matroid, and $M_0$ contains no induced $N$-restriction, then neither does $M$.
	\end{itemize} 
\end{lemma}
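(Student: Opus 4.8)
The plan is to set up coordinates so that the doubling operation becomes transparent, and then treat the two bullet points separately. Write $G$ for the projective geometry of $M$, fix a hyperplane $H$ of $G$ and an element $w \in G-H$ with respect to which $M$ is the doubling of $M_0 = M|H$, so that $E(M) = (E_0) \cup (w + E_0)$ where $E_0 = E(M) \cap H$, and $w \notin E(M)$. The key structural fact I would record first is the partition $G = \{w\} \sqcup H \sqcup (w+H)$, together with the observation that the map $x \mapsto x+w$ is a bijection $H \to w+H$ carrying $E_0$ onto $E(M) \cap (w+H)$ and carrying $H - E_0$ onto $(w+H) - E(M)$; in particular $G - E(M) = \{w\} \sqcup (H - E_0) \sqcup (w + (H-E_0))$, i.e.\ $\overline{M}$ is (up to the irrelevant point $w$) again built symmetrically over the two halves.

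For the first bullet, $\chi(M) = \chi(M_0)$: the inequality $\chi(M_0) \le \chi(M)$ is immediate since $M_0$ is a restriction of $M$ (restriction to a flat can only decrease $\chi$, a fact quoted in the Critical Number subsection). For the reverse, take a projective geometry $P \subseteq G(M_0) - E_0 = H - E_0$ of codimension $\chi(M_0)$ in $H$; then $\cl_G(P)$ is a flat of $G$ of the same dimension, hence of codimension $\chi(M_0) + 1 - 1 = \chi(M_0)$ in $G$ — wait, more carefully: $\dim(\cl_G(P)) = \dim(P)$ and $\dim(G) = \dim(H)+1$, so its codimension in $G$ is $\chi(M_0)+1$. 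To recover the extra dimension, I use $w$: since every triangle through $w$ meets $E(M)$ in $0$ or $2$ points, and $P \subseteq G-E(M)$ avoids $E(M)$, the flat $\cl_G(P \cup \{w\})$ equals $P \sqcup \{w\} \sqcup (P+w)$, all three parts of which lie in $G - E(M)$ (the middle by choice of $w$, the outer two by the symmetry recorded above). This is a projective geometry of dimension $\dim(P)+1$ inside $\overline{M}$, hence of codimension $\chi(M_0)$ in $G$, giving $\chi(M) \le \chi(M_0)$.

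For the second bullet, suppose $N$ is not a doubling of any matroid and $M_0$ has no induced $N$-restriction; I argue $M$ has none either. Suppose for contradiction $\varphi$ is an induced embedding of $N$ in $M$, with image flat $F = \varphi(G(N))$, so $M|F \cong N$. Case analysis on the position of $F$ relative to $w$ and $H$. If $w \notin F$ then $F \cap H$ is a hyperplane of $F$ (or $F \subseteq H$); in either case $F \subseteq H' $ for some hyperplane $H'$ of $G$ with $w \notin H'$, and the parenthetical remark in the Doublings subsection says $M$ is then the doubling of $M|H'$, so $F$ (being disjoint from $w$) lies in a hyperplane not containing $w$; but then... the cleanest route is: if $w \notin F$, then since $M$ is a doubling by $w$, every triangle of $F$ through a point outside... — rather, I will use directly that for $w \notin F$ the flat $F$ embeds into $H$ via a linear map fixing $F \cap H$ and sending the ``$w+H$ part'' of $F$ across by $x \mapsto x+w$; this map is an induced embedding of $M|F$ into $M|H = M_0$ because it respects the membership pattern in $E(M)$ by the symmetry above, so $M_0$ contains induced $N$, contradiction. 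If $w \in F$, then $F$ is itself a doubling: $H \cap F$ is a hyperplane of $F$, $w \in F - (H\cap F)$, and every triangle of $F$ through $w$ is a triangle of $G$ through $w$, hence meets $E(M)$ in $0$ or $2$ points — so $M|F$ is the doubling of $M|(H \cap F)$, making $N \cong M|F$ a doubling, contrary to hypothesis.

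The main obstacle I anticipate is purely bookkeeping: making the ``push everything into $H$'' linear map in the $w \notin F$ case genuinely well-defined and genuinely induced (one must check it is injective on $F$, lands in $H$, and that a point of $F$ is in $E(M)$ if and only if its image is in $E(M) \cap H = E_0$, which is exactly where the doubling identity $E(M) = E_0 \cup (w+E_0)$ and $w \notin E(M)$ get used). Everything else is routine given the partition $G = \{w\} \sqcup H \sqcup (w+H)$ and the triangle criterion for doublings recorded earlier in the excerpt.
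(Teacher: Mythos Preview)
Your proposal is correct and follows essentially the same approach as the paper. For $\chi(M)=\chi(M_0)$ both you and the paper extend a large flat in $H-E_0$ by adjoining $w$; for the second bullet both split on whether $w\in F$, observing that $M|F$ is itself a doubling when $w\in F$, and when $w\notin F$ your explicit ``push into $H$'' map $x\mapsto x$ on $F\cap H$, $x\mapsto x+w$ on $F-H$ is precisely the isomorphism $M|F\cong M|F'$ with $F'=H\cap\cl(F\cup\{w\})$ that the paper invokes without spelling out (indeed, your map is just the restriction to $F$ of the linear projection $G\to H$ with kernel $\{w\}$, which is injective on $F$ since $w\notin F$, so linearity comes for free).
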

\begin{proof}
	Let $M = (E,G)$ and let $w$ and $H$ be a point and a hyperplane of $G$ so that $M_0 = (E \cap H,H)$ and each triangle of $G$ containing $w$ contains either two or zero elements of $M$. 
	
	To prove the first part, let $c = \chi(M_0)$ and let $U_0$ be a $c$-codimensional flat of $H$ for which $U_0 \cap (E \cap H)$ is empty. Since $M_0$ is a restriction of $M$, we have $\chi(M) \ge c$. However, by construction the flat $U = \cl(U_0 \cup \{w\})$ is disjoint from $E$ and has codimension $c$ in $G$. Therefore $\chi(M) \le c$, and the result holds. 
	
	For the second part, let $d = \dim(N)$ and let $F$ be a $d$-dimensional flat of $G$. If $w \in F$, then every triangle of $F$ containing $w$ contains zero or two elements of $E$, so $M|F$ is a doubling and thus $M|F \not\cong N$. If $w \notin F$ is a $d$-dimensional flat of $G$ not containing $w$, then $M|F \cong M|F'$, where $F' = H \cap (\cl(F \cup \{w\})$. Since $M|F'$ is an induced restriction of $M_0$, it follows that $M|F \not\cong N$, so $M$ has no induced $N$-restriction. 
\end{proof}

In particular, the above lemma applies whenever $N$ has odd size. 

\section{Even matroids and twist doublings}\label{evensection} 

Let $k \ge 2$ be an integer. We say that a matroid $M$ is \emph{$k$-even} if every induced restriction of $M$ of dimension at least $k$ has even size. Let $\cE_k$ denote the class of $k$-even matroids. 

Let $M = (E,G)$ and $N = (D,G)$ be matroids, let $G'$ be a projective geometry having $G$ as a hyperplane, and let $w \in G' - G$. The \emph{twist doubling of $M$ by $w$ with respect to $N$} is the matroid $M' = (E',G')$ where $E' = E \cup (w + (E \Delta D))$. Note that $w \notin E'$ and that $M'|G = M$. 

We write $\twistw{M}{N}{(G',w)}$ for this matroid. Note that this matroid is determined up to isomorphism by just $M$ and $N$; we simply write $\twist{M}{N}$ when $G'$ and $w$ are not important. If $N$ is empty, then $\twist{M}{N}$ is just the doubling of $M$. For any matroid $M' = (E',G')$, any $w \in G'-E'$, and any hyperplane $G$ of $G'$ not containing $w$, it is not hard to see that $M' = \twistw{(M'|G)}{N}{(G',w)}$, where \[N = (\{x \in G \colon |\{x,x+w\} \cap E'| = 1\},G).\] Thus, every matroid that is not a projective geometry is a twist doubling of a smaller matroid. The next lemma characterises exactly when a twist doubling preserves membership in $\cE_k$.

\begin{lemma}\label{dothetwist}
	If $k \ge 2$ is an integer, and $M = (E,G)$ and $N = (D,G)$ are matroids with $M \in \cE_k$, then $\twist{M}{N} \in \cE_k$ if and only if $N \in \cE_{k-1}$. 
\end{lemma}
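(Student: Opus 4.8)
The plan is to relate the dimension-$d$ induced restrictions of $M' := \twistw{M}{N}{(G',w)}$ to induced restrictions of $M$ and of $N$, exploiting the partition of $G'$ into $\{w\}$, $G$, and $G + w$. Fix $d \ge k$ and a $d$-dimensional flat $F'$ of $G'$. There are two cases according to whether $w \in F'$. If $w \notin F'$, then $F := H \cap \cl(F' \cup \{w\})$ plays its usual role: $M'|F'$ is isomorphic to an induced restriction of $M' | G = M$, and since $M \in \cE_k$ and $\dim(F') = d \ge k$, this restriction has even size. (Here I would need the already-familiar fact, used in Lemma~\ref{doublinggood}, that projecting away $w$ identifies $M'|F'$ with $M|F$ — more precisely, that $x \mapsto x$ or $x \mapsto x+w$ matches $F'$ with the hyperplane $F$ of $\cl(F' \cup \{w\})$ inside $G$.) So flats avoiding $w$ never cause trouble and never see $N$; all the action is in flats through $w$.

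Now suppose $w \in F'$. Write $F := F' \cap G$, a hyperplane of $F'$, so $\dim(F) = d - 1$; and $F' = \{w\} \cup F \cup (F + w)$. Using $E' = E \cup (w + (E \Delta D))$ and $w \notin E'$, count: the elements of $E' \cap F'$ in $F$ number $|E \cap F|$, and the elements in $F + w$ are exactly $w + (x)$ for $x \in (E \Delta D) \cap F$, numbering $|(E \Delta D) \cap F|$. Hence
\[
  |E' \cap F'| = |E \cap F| + |(E \Delta D) \cap F|.
\]
Since $|(E \Delta D) \cap F| = |E \cap F| + |D \cap F| - 2|E \cap D \cap F|$, this gives $|E' \cap F'| \equiv |D \cap F| \pmod 2$. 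In other words, a $d$-dimensional induced restriction of $M'$ through $w$ has even size if and only if the corresponding $(d-1)$-dimensional induced restriction $N|F$ of $N$ has even size. (One should also observe that as $F'$ ranges over the $d$-dimensional flats of $G'$ through $w$, $F = F' \cap G$ ranges over exactly the $(d-1)$-dimensional flats of $G$, so no induced restriction of $N$ of that dimension is missed.)

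Combining the two cases: every $d$-dimensional induced restriction of $M'$ has even size if and only if every $(d-1)$-dimensional induced restriction of $N$ has even size, \emph{given} that $M \in \cE_k$ takes care of the flats avoiding $w$. Quantifying over all $d \ge k$, the left side says $M' \in \cE_k$ and the right side says: every induced restriction of $N$ of dimension $\ge k-1$ has even size, i.e.\ $N \in \cE_{k-1}$. This proves both directions. (For the ``only if'' direction, if $N \notin \cE_{k-1}$, pick a $(d-1)$-dimensional induced restriction of $N$ of odd size with $d - 1 \ge k - 1$, i.e.\ $d \ge k$, lift it to $F'$ through $w$, and the displayed congruence exhibits an odd-size $d$-dimensional induced restriction of $M'$.) The one point demanding care — the main obstacle, though it is more bookkeeping than difficulty — is the bijection between $d$-flats of $G'$ through $w$ and $(d-1)$-flats of $G$, and the companion claim in the $w \notin F'$ case that $M'|F' \cong M|(H \cap \cl(F'\cup\{w\}))$; both are the natural analogues of the computations in Lemma~\ref{doublinggood} and should be dispatched the same way, using that $\cl(F \cup \{w\}) = \{w\} \sqcup F \sqcup (F+w)$ and that $E'$ restricted to $G$ is exactly $E$.
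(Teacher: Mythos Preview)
Your case where $w \in F'$ is correct, and in fact cleaner than the paper's computation: the identity $|E' \cap F'| \equiv |D \cap (F' \cap G)| \pmod 2$ is exactly what drives the equivalence. The problem is the other case.

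When $w \notin F'$, you claim that $M'|F'$ is isomorphic to the induced restriction $M|F$ with $F = G \cap \cl(F' \cup \{w\})$, by the same projection argument as in Lemma~\ref{doublinggood}. This is false for twist doublings. That projection sends $x \in F' \cap (G+w)$ to $x+w \in G$, and $x \in E'$ iff $x+w \in E \Delta D$, whereas $x + w \in E(M)$ iff $x+w \in E$; these disagree precisely on $D$. So unless $D = \varnothing$ (i.e.\ an honest doubling), the map does not preserve ground-set membership. Concretely, if $E = \varnothing$ and $D = \{a\}$ in $G = \PG(1,2)$, then $E' = \{a+w\}$, and a $2$-dimensional flat $F'$ through $a+w$ avoiding $w$ has $|E' \cap F'| = 1$, while the projected flat in $G$ meets $E$ in $0$ points. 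In particular, your assertion that ``flats avoiding $w$ never see $N$'' is wrong: they do, and you need $N \in \cE_{k-1}$ to handle them.

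The fix is not hard and keeps your case split. For $F' \subseteq G$ the claim is immediate from $M \in \cE_k$. For $w \notin F'$ and $F' \not\subseteq G$, set $F_0 = F' \cap G$ (dimension $d-1$) and let $F_1 = (F' - F_0) + w \subseteq G$; then $F := F_0 \cup F_1$ is a $d$-dimensional flat of $G$, and the same symmetric-difference count you used in the $w \in F'$ case gives
\[
|E' \cap F'| \equiv |E \cap F_0| + |(E \Delta D) \cap F_1| \equiv |E \cap F| + |D \cap F_1| \equiv |D \cap F| + |D \cap F_0| \pmod 2,
\]
and now both terms on the right are even since $N \in \cE_{k-1}$ and $\dim(F_0) = d-1 \ge k-1$, $\dim(F) = d \ge k-1$. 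The paper avoids this case analysis by a uniform characteristic-function computation over $\cl(F \cup \{w\})$, but once patched your approach is a legitimate alternative.
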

\begin{proof}
	Let $G'$ be a projective geometry containing $G$ as a hyperplane, let $w \in G'-G$, and let $M' = (E',G') = \twistw{M}{N}{(G',w)}$.
	
	Suppose first that $N \in \cE_{k-1}$. Let $F$ be a flat of $G'$ of dimension $h \ge k$. Let $F'$ be an $(h+1)$-dimensional flat containing $F \cup \{w\}$ and let $F_0 = F' \cap G$. Note that $F_0$ is a hyperplane of $F'$. Identify the elements in $F'$ with the nonzero vectors in $\bF_2^{h+1}$. Write $\ip{\cdot}{\cdot}\colon (\bF_2^{h+1})^2 \to \bF_2$ for the `dot product' bilinear form on this vector space, and let $u \in \bF_2^{h+1}$ be the vector for which $F = \{v \in \bF_2^{h+1}\colon \ip{u}{v} = 0\}$. Note that $F'$ is the disjoint union of the pairs $\{x,x+w\}$ where $x \in F_0$, and that $N|F_0 \in \cE_{k-1}$ and $M|F_0 \in \cE_k$ by assumption. 
	
	For each set $X \subseteq G'$, let $1_X\colon G' \to \bF_2$ be the characteristic function of $X$, so $1_F(x) = \ip{x}{u} + 1$. By assumption we have $1_{E'}(w) = 0$ while $1_{E'}(x+w) = 1_{E \Delta D}(x) = 1_E(x)+1_D(x)$ for each $x \in F_0$. The expression $|E' \cap F| \pmod{2}$ is given by
	\begin{align*}
	& \sum_{x \in F'}1_{E'}(x)1_F(x) \\
	&= \sum_{x \in F_0}(1_E(x)1_F(x) + 1_{E'}(x+w)1_F(x+w))\\
	&= \sum_{x \in F_0}(1_E(x)(\ip{x}{u}+1) + (1_E(x) + 1_D(x))(\ip{x+w}{u}+1))\\
	&= \sum_{x \in F_0}(1_E(x)\ip{w}{u} + 1_D(x)(\ip{x}{u} + \ip{w}{u} + 1))\\
	&= \ip{w}{u}\sum_{x \in F_0}1_E(x) + \ip{w}{u}\sum_{x \in F_0'}1_D(x)  + \sum_{x \in F_0}1_F(x)1_D(x)\\
	&= \ip{w}{u}|E \cap F_0| + \ip{w}{u}|D \cap F_0| + |D \cap (F \cap F_0)|.
	\end{align*}
	Now $\dim(F_0) = h \ge k$ and $M \in \cE_k$ implies that the first term is even, and $\dim(F_0 \cap F) \ge h-1 \ge k-1$ and $N|F_0 \in \cE_{k-1}$ together imply that the last two terms are even; thus $|E' \cap F|$ is even. This gives $M' \in \cE_k$ as required.
	
	Conversely, suppose that $M' \in \cE_k$. Note that for each $x \in G$, the set $\{x,x+w\} \cap E'$ has size $1$ if and only if $x \in D$. If $N \notin \cE_{k-1}$ then $G$ has a flat $F$ with $\dim(F) \ge k-1$ and $|F \cap D|$ odd. But then \[|\cl(F \cup \{w\}) \cap E'| = \sum_{x \in F - \{w\}}(|\{x,x+w\} \cap E'|) \equiv |F \cap D| \equiv 1 \pmod{2}.\] Since $\dim(\cl(F \cup \{w\})) \ge k$, this contradicts $M' \in \cE_k$. 
\end{proof}

The above implies that every matroid in $\cE_k$ that is not a projective geometry is a twist doubling $\twist{M}{N}$ where $M \in \cE_k$ and $N \in \cE_{k-1}$. This has as a consequence the following description of matroids in $\cE_k$. 

\begin{theorem}\label{evenstructure}
	Let $k \ge 2$ be an integer and $M$ be a matroid of dimension $n$. The following are equivalent:
	\begin{enumerate}
		\item\label{em1} $M \in \cE_k$,
		\item\label{em2} every $k$-dimensional induced restriction of $M$ has even size, 
		\item\label{em3} there are matroids $M_0,M_1, \dotsc, M_s$ and $N_0, \dotsc, N_{s-1} \in \cE_{k-1}$ such that $\dim(M_0) \le  k-1$ and $M_s = M$, while $G(M_i) = G(N_i)$ and $M_{i+1} \cong \twist{M_i}{N_i}$ for each $i \in \{0, \dotsc, s-1\}$.
	\end{enumerate}
\end{theorem}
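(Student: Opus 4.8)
The plan is to prove the cyclic chain of implications $(\ref{em1})\Rightarrow(\ref{em2})\Rightarrow(\ref{em3})\Rightarrow(\ref{em1})$. The first implication is immediate from the definition of $\cE_k$, since a $k$-dimensional induced restriction has dimension at least $k$. The last implication, $(\ref{em3})\Rightarrow(\ref{em1})$, will follow by induction on $s$: the base case $\dim(M_0) \le k-1$ means $M_0$ has no induced restriction of dimension at least $k$, hence vacuously $M_0 \in \cE_k$; and the inductive step is exactly Lemma~\ref{dothetwist}, which tells us that $M_{i+1} \cong \twist{M_i}{N_i} \in \cE_k$ whenever $M_i \in \cE_k$ and $N_i \in \cE_{k-1}$.

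The substantive work is $(\ref{em2})\Rightarrow(\ref{em3})$. I would argue by induction on $n = \dim(M)$. If $n \le k-1$, take $s = 0$ and $M_0 = M$; there is nothing to check. If $n \ge k$, I want to peel off one dimension. The key observation recorded in the paragraph before Lemma~\ref{dothetwist} is that every matroid that is not a projective geometry is a twist doubling of a matroid on a hyperplane: if $M = (E',G')$ is not a projective geometry, pick $w \in G'-E'$ and a hyperplane $G$ of $G'$ with $w \notin G$, set $M' := M|G$ and $N := (\{x \in G : |\{x,x+w\}\cap E'| = 1\},G)$, and then $M \cong \twistw{M'}{N}{(G',w)}$. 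So first I must handle the case that $M$ \emph{is} a projective geometry $\PG(n-1,2)$: here $|E| = 2^n - 1$ is odd while $n \ge k$, so hypothesis $(\ref{em2})$ is violated unless... actually a projective geometry of dimension $\ge k$ has an induced restriction of dimension exactly $k$ (any $k$-dimensional flat), namely $\PG(k-1,2)$, which has odd size $2^k-1$; so $(\ref{em2})$ forces $M$ not to be a projective geometry of dimension $\ge k$. Hence when $n \ge k$ we may assume $M$ is not a projective geometry and write $M \cong \twist{M'}{N}$ with $\dim(M') = \dim(N) = n-1$.

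Now I need two things. First, $M' = M|G \in \cE_k$ so that induction applies to it: since $M \in \cE_k$ by $(\ref{em1})\Leftrightarrow(\ref{em2})$ applied in dimension $n$ (or directly, since every induced restriction of $M'$ of dimension $\ge k$ is an induced restriction of $M$), we get $M' \in \cE_k$, and being of dimension $n-1$ it has, by the inductive hypothesis, a chain $M_0, \dots, M_{s-1} = M'$ of the required form. Second, and this is the crux, I need $N \in \cE_{k-1}$; then setting $N_{s-1} := N$ and $M_s := M$ extends the chain. But $N \in \cE_{k-1}$ is exactly the conclusion of the converse direction of Lemma~\ref{dothetwist}: since $M = \twist{M'}{N} \in \cE_k$ and $M' \in \cE_k$, that lemma gives $N \in \cE_{k-1}$. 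So in fact both directions of Lemma~\ref{dothetwist} do all the heavy lifting, and the remaining content is the bookkeeping of the induction plus the observation that a projective geometry of dimension $\ge k$ fails $(\ref{em2})$.

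The main obstacle is making sure the roles of "dimension at least $k$" versus "dimension exactly $k$" are tracked correctly: the equivalence $(\ref{em1})\Leftrightarrow(\ref{em2})$ is precisely the statement that one may replace "at least" by "exactly" in the definition of $k$-evenness, and it is this equivalence that lets the induction close (we get to assume the strong hypothesis $M \in \cE_k$ in dimension $n$ from the weak hypothesis $(\ref{em2})$, because once we know $(\ref{em1})\Leftrightarrow(\ref{em2})$ for all smaller dimensions we can bootstrap). Concretely I would organize it so that the induction simultaneously establishes $(\ref{em2})\Rightarrow(\ref{em3})$ and $(\ref{em2})\Rightarrow(\ref{em1})$, or simply note that $(\ref{em3})\Rightarrow(\ref{em1})\Rightarrow(\ref{em2})$ is already proved, so once $(\ref{em2})\Rightarrow(\ref{em3})$ is done the full cycle is complete and no separate bootstrapping is needed. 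The only genuinely new verification beyond Lemma~\ref{dothetwist} is that $M$ of dimension $\ge k$ satisfying $(\ref{em2})$ cannot be a projective geometry, which is the one-line size computation above.
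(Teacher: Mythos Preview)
Your overall strategy—$(\ref{em1})\Rightarrow(\ref{em2})$ trivially, $(\ref{em3})\Rightarrow(\ref{em1})$ by the forward direction of Lemma~\ref{dothetwist}, and $(\ref{em2})\Rightarrow(\ref{em3})$ by induction on dimension using the converse direction—is exactly the paper's, and your treatment of the projective-geometry case and of the inductive decomposition $M \cong \twist{M'}{N}$ is correct. But there is a real gap in your $(\ref{em2})\Rightarrow(\ref{em3})$ step, one you flag as the ``main obstacle'' but do not actually resolve.

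To invoke the converse of Lemma~\ref{dothetwist} and conclude $N \in \cE_{k-1}$, you need $M = \twist{M'}{N} \in \cE_k$, i.e.\ condition $(\ref{em1})$ for the top matroid $M$ of dimension $n$; you only have $(\ref{em2})$. Your first suggestion, ``$(\ref{em1})\Leftrightarrow(\ref{em2})$ applied in dimension $n$'', is circular. Your second, ``once $(\ref{em2})\Rightarrow(\ref{em3})$ is done the full cycle is complete'', does not help either: you need $(\ref{em1})$ for $M$ \emph{inside} the inductive step, before the cycle closes at dimension $n$. Your bootstrapping proposal is the right idea but is not free: the equivalence in dimensions $<n$ gives $|E \cap F|$ even for every \emph{proper} flat $F$ with $\dim F \ge k$, yet you still must show $|E|$ itself is even when $n>k$, and nothing you have written provides this.

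The paper supplies precisely this missing ingredient as a separate, direct proof of $(\ref{em2})\Rightarrow(\ref{em1})$: take a flat $F$ of minimum dimension with $\dim F \ge k$ and $|E\cap F|$ odd; by minimality every hyperplane of $F$ meets $E$ evenly; summing $|E\cap H|$ over all hyperplanes $H$ of $F$ and using that each point lies in an odd number of them forces $|E\cap F|$ even, a contradiction. So your final claim that ``the only genuinely new verification beyond Lemma~\ref{dothetwist} is that $M$ \ldots\ cannot be a projective geometry'' is incorrect: this short hyperplane-counting argument is the second piece of real content, and it cannot be recovered just by routing the implication through $(\ref{em3})$.
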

\begin{proof}
	That (\ref{em1}) implies (\ref{em2}) is immediate. Suppose that (\ref{em2}) holds and (\ref{em1}) does not. Let $M' = (E',G')$ be a minimal induced restriction of $M$ for which $M' \notin \cE_k$ and let $n' = \dim(M')$; note that $n' > k$. Let $\cH$ be the collection of hyperplanes of $G'$. By minimality, for each $H \in \cH$ the quantity $|H \cap E'|$ is even, while $|E'|$ is odd. Each $x \in G'$ is in $2^{n-2}-1$ hyperplanes in $\cH$, so $\sum_{H \in \cH}|E' \cap H| = (2^{n-2}-1)|E'|$. Since $2^{n-2}-1$ is odd and each $|E' \cap H|$ is even, it follows that $|E'|$ is even, a contradiction. Therefore (\ref{em1}) and (\ref{em2}) are equivalent. 
	
	Since every matroid of dimension at most $k-1$ is in $\cE_k$, it follows from Lemma~\ref{dothetwist} that (\ref{em3}) implies (\ref{em1}). The converse implication clearly holds if $n = \dim(M)$ is at most $k-1$; suppose inductively that $n \ge k$ and that it holds for all dimensions less that $n$. Let $M = (E,G) \in \cE_k$. If $E = G$ then $|E| = 2^d-1$ is odd so the fact that $n \ge k$ contradicts $M \in \cE_k$. Otherwise, let $w \in G - E$. Let $H$ be a hyperplane of $G$ not containing $w$. Now we have $M = \twistw{(M|H)}{N}{w}$ for the matroid $N = (\{x \in H\colon |\{x,x+w\} \cap E| = 1\},H)$; it follows from Lemma~\ref{dothetwist} that $N \in \cE_{k-1}$. Since $M|H \in \cE_{k-1}$, (\ref{em3}) follows routinely by induction. 
\end{proof}

We are mainly concerned from here on with the class $\cE_3$. It follows from Lemma~\ref{recognisebb} that every matroid $N$ in $\cE_2$ is empty or an affine geometry. If $N$ is empty, then a twist doubling by $N$ is simply a doubling. If $N$ is an affine geometry, we call the twist doubling of a matroid $M = (E,G)$ by $N$ a \emph{semidoubling} of $M$, since the set of points that are `doubled' forms a hyperplane $H$ of $G$, while the other points are `antidoubled'. This hyperplane $H$ is simply the complement of $E(N)$, and it is more convenient in this case to describe the semidoubling in terms of $H$ rather than introducing a matroid $N$; if $H$ is a hyperplane of $G$ then the \emph{semidoubling of $M$ with respect to $H$} refers to the twist doubling of $M$ with respect to the affine geometry $N = (G-H,G)$. 
Lemma~\ref{dothetwist} has the following consequences for matroids in $\cE_3$.

\begin{corollary}\label{e3grow}
	Let $M = (E,G) \in \cE_3$ and $H$ be a hyperplane of $G$. If $G-H \not\subseteq E$, then $M$ is a doubling or semidoubling of $M|H$.
\end{corollary}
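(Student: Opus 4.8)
The plan is to exhibit $M$ as a twist doubling of $M|H$, use Lemma~\ref{dothetwist} to locate the twist in $\cE_2$, and then appeal to the classification of $\cE_2$ to read off which of the two named operations has occurred. First, since $G - H \not\subseteq E$, choose $w \in (G-H)\del E$, so that $w \in G - (E \cup H)$. The observation recorded just before Theorem~\ref{evenstructure} then gives $M = \twistw{(M|H)}{N}{(G,w)}$, where $N = (D,H)$ with $D = \{x \in H \colon |\{x,x+w\}\cap E| = 1\}$.

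Next, $M|H$ is an induced restriction of $M$, and $\cE_3$ is closed under induced restrictions (induced restrictions compose, so an induced restriction of $M|H$ is one of $M$), hence $M|H \in \cE_3$. Since also $\twist{(M|H)}{N} = M \in \cE_3$ by hypothesis, Lemma~\ref{dothetwist}, applied with $k=3$ and with $M|H$ in the role of its matroid $M$, yields $N \in \cE_2$.

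Finally, by the remark following Theorem~\ref{evenstructure} (itself a consequence of Lemma~\ref{recognisebb}), $N$ is either empty or an affine geometry. If $N$ is empty, then $\twist{(M|H)}{N}$ is by definition the doubling of $M|H$. If $N$ is an affine geometry, it has the form $(H-H',H)$ for some hyperplane $H'$ of $H$, and $\twist{(M|H)}{N}$ is then precisely the semidoubling of $M|H$ with respect to $H'$. In either case $M$ is a doubling or semidoubling of $M|H$, as required. I do not expect a real obstacle here: the content is entirely carried by Lemma~\ref{dothetwist} and the $\cE_2$ classification, and the only care needed is to confirm that the $N$ produced by the twist-doubling identity is literally the matroid to which Lemma~\ref{dothetwist} applies, and to match ``empty $N$'' with ``doubling'' and ``affine $N$'' with ``semidoubling'' against the definitions given before the corollary.
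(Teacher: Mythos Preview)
Your proof is correct and follows essentially the same approach as the paper's: pick $w \in G - (E \cup H)$, write $M$ as a twist doubling $\twistw{(M|H)}{N}{(G,w)}$, apply Lemma~\ref{dothetwist} to get $N \in \cE_2$, and read off the two cases from the $\cE_2$ classification. The paper's version is simply terser, omitting the explicit verification that $M|H \in \cE_3$ and the case-matching at the end; your added detail is fine (and, incidentally, the ``only if'' direction of Lemma~\ref{dothetwist} does not actually use the hypothesis $M|H \in \cE_3$, so that step, while true, is not strictly needed).
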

\begin{proof}
	Let $w \in G - (E \cup H)$. There is a matroid $N = (F,H)$ for which $M = \twistw{(M|H)}{N}{w}$; Lemma~\ref{dothetwist} gives $N \in \cE_2$ and the result follows. 
\end{proof}

\begin{corollary}\label{weake3structure}
	Let $M$ be a matroid of dimension at least $2$. Then $M \in \cE_3$ if and only if $M$ arises from a $2$-dimensional matroid by a sequence of doublings and/or semidoublings. 
\end{corollary}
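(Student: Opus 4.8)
The plan is a short induction on dimension, leaning entirely on the two corollaries just established together with Lemma~\ref{dothetwist}.

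For the \emph{if} direction, I would note that a $2$-dimensional matroid lies in $\cE_3$ trivially, having no induced restriction of dimension at least $3$, and then argue that each construction step preserves membership in $\cE_3$. A doubling of $M$ is $\twist{M}{N}$ with $N$ empty, and a semidoubling is $\twist{M}{N}$ with $N$ an affine geometry; in both cases $N \in \cE_2$ (the empty matroid is in every $\cE_k$, and a flat of dimension at least $2$ meets an affine geometry either in the empty set or in an affine geometry of even size), so Lemma~\ref{dothetwist} with $k=3$ gives that the resulting twist doubling remains in $\cE_3$. Induction on the length of the sequence finishes this direction.

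For the \emph{only if} direction, I would induct on $n = \dim(M)$, the case $n = 2$ being immediate via the empty sequence. For $n \ge 3$, first observe that $E \ne G$: otherwise $|E| = 2^n - 1$ is odd, yet $M$ itself is an induced restriction of dimension $n \ge 3$, contradicting $M \in \cE_3$. Choose $w \in G - E$ and a hyperplane $H$ of $G$ with $w \notin H$; then $w \in (G-H) - E$, so $G - H \not\subseteq E$, and Corollary~\ref{e3grow} shows $M$ is a doubling or semidoubling of $M|H$. Since $H$ is a flat of $G$, every induced restriction of $M|H$ is one of $M$, so $M|H \in \cE_3$, and $\dim(M|H) = n-1 \ge 2$, whence the inductive hypothesis writes $M|H$ as arising from a $2$-dimensional matroid by doublings and semidoublings; appending the final step does the same for $M$.

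I do not anticipate a real obstacle: the substantive work is all in Lemma~\ref{dothetwist} and Corollary~\ref{e3grow}. The one point that needs a moment's care is the edge case $E = G$, where there is no hyperplane $H$ with $G - H \not\subseteq E$ and so Corollary~\ref{e3grow} does not apply directly; this is dispatched by the parity observation that $2^n - 1$ is odd.
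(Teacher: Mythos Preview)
Your proposal is correct and follows essentially the same approach as the paper: both directions invoke Lemma~\ref{dothetwist} for the closure of $\cE_3$ under doublings and semidoublings, and the forward direction is an induction on dimension that uses Corollary~\ref{e3grow} after ruling out $E = G$ via parity (the paper phrases this as $M$ having an induced $F_7$-restriction, which is the same observation). The only cosmetic difference is that you spell out a few more details, such as why the empty matroid and affine geometries lie in $\cE_2$.
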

\begin{proof}
	Since a doubling/semidoubling is precisely a twist doubling by a matroid in $\cE_2$, the backwards implication follows from Lemma~\ref{dothetwist}. Conversely, if $M = (E,G) \in \cE_3$ we may inductively assume that $\dim(M) \ge 3$ and therefore $E \ne G$, since otherwise $M$ has an induced $F_7$-restriction. Let $w \in G-E$ and $H$ be a hyperplane of $G$ not containing $w$. Corollary~\ref{e3grow} now gives that $M$ is a doubling or semidoubling of $M|H$, and the result follows by induction. 
\end{proof}

\section{Even plane matroids}\label{evenplanesection}

Call the members of $\cE_3$ the \emph{even plane matroids}. We have seen that they arise from small matroids via doublings and semidoublings; we now show that nearly every example is in fact a semidoubling.

\begin{lemma}\label{findsemidoubler}
	If $M = (E,G) \in \cE_3$ and $H$ is a hyperplane of $G$ for which $M|H$ is not a Bose-Burton geometry, then $M$ is a semidoubling of $M|H$. 
\end{lemma}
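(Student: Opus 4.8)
The plan is to lean on Corollary~\ref{e3grow}. Once we know $G - H \not\subseteq E$, we may fix any $w \in G - (E \cup H)$ and write $M = \twistw{(M|H)}{N_w}{(G,w)}$, where $N_w = (\{x \in H \colon |\{x,x+w\} \cap E| = 1\}, H)$. Since $M \in \cE_3$ and $M|H \in \cE_3$ (being an induced restriction of $M$), Lemma~\ref{dothetwist} forces $N_w \in \cE_2$, so by Lemma~\ref{recognisebb} the matroid $N_w$ is either empty --- in which case $M$ is the doubling of $M|H$ by $w$ --- or an affine geometry $(H - H', H)$, in which case $M$ is, by definition, the semidoubling of $M|H$ with respect to $H'$. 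So the lemma reduces to producing a single $w \in G - (E \cup H)$ for which $N_w$ is nonempty.

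First I would dispose of the degenerate possibility $G - H \subseteq E$, showing it is incompatible with the hypothesis. As $M|H$ is not a Bose-Burton geometry, Lemma~\ref{recognisebb} gives a triangle $T = \{a,b,c\}$ of $H$ with exactly one point, say $a$, in $E$ (so $b, c \notin E$); in particular $\dim(H) \ge 2$. Picking any $w \in G - H$, the set $F = \cl(\{a,b,w\})$ is a plane (as $w \notin \cl(\{a,b\}) = T$), with point set $\{a,b,c,w,a+w,b+w,c+w\}$; the last four of these lie in $G - H$, hence in $E$ under our assumption, while $b, c \notin E$. Thus $|E \cap F| = 5$, contradicting $M \in \cE_3$. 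Therefore $G - H \not\subseteq E$, the set-up above is available, and $G - (E \cup H) \ne \emptyset$.

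For the main step, suppose for a contradiction that $N_w$ is empty for \emph{every} $w \in G - (E \cup H)$, i.e.\ that $M$ is the doubling of $M|H$ by each such $w$. Fix the triangle $T = \{a,b,c\}$ of $H$ as above, with $a \in E$ and $b, c \notin E$, and choose $w_0 \in G - (E \cup H)$. Since $M$ is the doubling of $M|H$ by $w_0$ and $b \in H - E$, the element $w_1 := b + w_0$ again lies in $G - (E \cup H)$, so $M$ is also the doubling of $M|H$ by $w_1$. Comparing the two descriptions of $E \cap (G - H)$ yields $w_0 + (E \cap H) = w_1 + (E \cap H)$, hence $E \cap H = (w_0 + w_1) + (E \cap H) = b + (E \cap H)$; but then $a \in E \cap H$ forces $c = a + b \in E$, contradicting the choice of $T$.

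The one thing to be careful about --- and, I expect, the only genuine subtlety --- is that ``$M$ is a doubling of $M|H$'' and ``$M$ is a semidoubling of $M|H$'' are \emph{not} mutually exclusive: each is a property of a choice of the new point $w$, not of the pair $(M,H)$. That is why the argument is arranged to rule out \emph{every} admissible $w$ giving a doubling, rather than trying to show $M$ is ``not a doubling''. The computational core is just the short translation-by-$b$ comparison of two doubling representations; everything else is unwinding the definitions of doubling, semidoubling and twist doubling together with the cited lemmas.
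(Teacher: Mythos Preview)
Your argument is correct. Both your proof and the paper's ultimately rest on the same two ingredients: the ``bad'' triangle $T_0 \subseteq H$ with $|T_0 \cap E| = 1$ coming from Lemma~\ref{recognisebb}, and the dichotomy from Corollary~\ref{e3grow} (equivalently Lemma~\ref{dothetwist}) that any $w \in G-(E \cup H)$ yields a doubling or a semidoubling. The difference is in how the witnessing $w$ is produced. The paper works \emph{constructively}: it takes a plane $P$ through $T_0$ not contained in $H$, uses the parity condition $|P \cap E|$ even to force $|(P-T_0)\cap E|$ odd, and reads off a triangle $\{u,v,w\}$ with $u \in H$, $v,w \notin H$, $T\cap E = \{v\}$; this single plane argument simultaneously shows $G-H \not\subseteq E$ and exhibits a $w$ with $N_w \ne \varnothing$. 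You instead first dispose of $G-H \subseteq E$ by the same plane-parity count, and then argue by contradiction: if every admissible $w$ gave a doubling, then any $b \in H-E$ would satisfy $b + (E\cap H) = E\cap H$, which your bad triangle refutes. Your route is slightly longer but has the advantage of making explicit the point you flag at the end---that ``doubling'' and ``semidoubling'' are properties of the chosen $w$, so one really must rule out \emph{all} $w$ giving a doubling. The paper's approach is shorter because the parity of the plane does both jobs at once.
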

\begin{proof}
	Let $M = (E,G)$. Since $M|H$ is not a Bose-Burton geometry, there is a triangle $T_0$ of $H$ containing precisely one element of $E$. Let $P$ be a plane of $G$ containing $T_0$ but not contained in $H$. Since $M \in \cE_3$ we have $|(P-T_0) \cap E|$ odd; it follows easily that there is a triangle $T = \{u,v,w\} \subseteq P$ such that $u \in H$ while $v,w \notin H$, and $T \cap E = \{v\}$. Now $w \in G-(H \cup E)$ and the triangle $T$ certifies that $M$ is not a doubling of $M|H$ by $w$; the fact that $M$ is a semidoubling of $M|H$ by $w$ follows from Corollary~\ref{e3grow}. 
\end{proof}

\begin{corollary}\label{findsemidoubler1}
	If $M = (E,G) \in \cE_3$ and $\chi(M) \ge 3$, then for every hyperplane $H$ of $G$, the matroid $M$ is a semidoubling of $M|H$.
\end{corollary}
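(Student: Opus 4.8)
The plan is to deduce this quickly from Lemma~\ref{findsemidoubler} by showing that its hypothesis cannot fail. Fix a hyperplane $H$ of $G$ and suppose, for a contradiction, that $M$ is not a semidoubling of $M|H$. By the contrapositive of Lemma~\ref{findsemidoubler}, $M|H$ must then be a Bose-Burton geometry, so $E \cap H = H - F$ for some flat $F$ of $H$ whose codimension in $H$ equals $k := \chi(M|H)$. Since $\chi(M|H) \ge \chi(M) - 1 \ge 2$ (using $\chi(M|H) \le \chi(M) \le \chi(M|H)+1$ from the preliminaries on hyperplane restrictions), we have $k \ge 2$.

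Next I would argue that $M$ is itself a Bose-Burton geometry, by considering two cases. If $G - H \subseteq E$, then $E = (H - F) \cup (G - H) = G - F$, and since $F$ is also a flat of $G$ this exhibits $M$ as a Bose-Burton geometry (of order $k+1 = \chi(M)$). If instead $G - H \not\subseteq E$, then Corollary~\ref{e3grow} gives that $M$ is a doubling or a semidoubling of $M|H$; by hypothesis it is not a semidoubling, so it is a doubling, and since the doubling of a Bose-Burton geometry is again a Bose-Burton geometry of the same order (noted before Lemma~\ref{doublinggood}), $M$ is a Bose-Burton geometry of order $\chi(M)$. In either case $M = (G - F_0, G)$ for some flat $F_0$ of $G$ of codimension $\chi(M) \ge 3$.

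Finally I would exploit $M \in \cE_3$. Because $F_0$ has codimension at least $3$, extending a basis of the subspace corresponding to $F_0$ to a basis of the ambient space and taking the span of three of the new vectors produces a plane $F'$ of $G$ with $F' \cap F_0 = \emptyset$. Then $M|F' = (F' - (F' \cap F_0), F') = (F', F')$, which is a copy of $F_7$: a three-dimensional induced restriction of $M$ of odd size $7$. This contradicts $M \in \cE_3$, so in fact $M$ is a semidoubling of $M|H$, as desired.

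The only step that is not routine bookkeeping is the last one: the key observation is that a Bose-Burton geometry of order at least $3$ always contains an induced $F_7$-restriction, arising from a plane skew to the deleted flat. The modest care needed earlier is in handling both ways $M$ can fail to be a semidoubling of $M|H$ — the case $G - H \subseteq E$ and the doubling alternative supplied by Corollary~\ref{e3grow} — so as to reduce to that observation.
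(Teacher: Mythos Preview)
Your proof is correct and follows essentially the same approach as the paper's: both invoke Lemma~\ref{findsemidoubler} to make $M|H$ a Bose--Burton geometry, split on whether $G-H\subseteq E$, and derive a contradiction using Corollary~\ref{e3grow} in the remaining case. The only cosmetic difference is that you unify both cases into ``$M$ itself is Bose--Burton of order at least $3$, hence contains an induced $F_7$,'' whereas the paper first pins down the order of $M|H$ as exactly $2$ and, in the doubling case, obtains $\chi(M)\le 2$ directly from the flat $\cl(F\cup\{w\})$ rather than via the $F_7$ observation.
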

\begin{proof}
	Let $M = (E,G)$. We may assume by Lemma~\ref{findsemidoubler} that $M|H$ is a Bose-Burton geometry, so $F = H-E$ is a flat of $H$, and $\dim(F) \ge \dim(H) - 2$, as otherwise $H-F$ has an $F_7$-restriction contained in $E$. If $\dim(F) \ge \dim(H)-1$ then $\chi(M|H) \le 1$ and so $\chi(M) \le 2$, a contradiction. Therefore $\dim(F) = \dim(H)-2$. If $G-H \subseteq E$ then $M$ is the complement of the $3$-codimensional flat $F$ in $G$, so $M$ has an $F_7$-restriction. Thus there exists $w \in G - (H \cup E)$. If $G$ has a triangle $T = \{w,v,x\}$ where $x \in F$ and $T \cap E = \{v\}$ then $T$ certifies that $M$ is not a doubling of $M|H$ by $w$, and the result follows from Corollary~\ref{e3grow}. Therefore there is no such triangle; this implies that the flat $F'= \{w\} \cup F \cup (w + F) = \cl(F \cup \{w\})$ is disjoint from $E$. Now $\dim(F') = \dim(M) - 2$ and so $\chi(M) \le 2$, a contradiction. 
\end{proof}

This gives a decomposition theorem for $\cE_3$ that refines Theorem~\ref{evenstructure}.
\begin{theorem}\label{e3structure}
	Let $M$ be a nonempty matroid. Then $M \in \cE_3$ if and only if $M$ arises from a Bose-Burton geometry of order $1$ or $2$ by a sequence of semidoublings.
\end{theorem}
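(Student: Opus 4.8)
The plan is to deduce the statement from the already-established structural results, primarily Corollary~\ref{weake3structure} and Lemma~\ref{findsemidoubler}, by an induction on $\dim(M)$ that carefully handles the doublings produced by Corollary~\ref{weake3structure} and converts them, where necessary, into semidoublings.

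First I would dispose of the backwards implication: a Bose-Burton geometry of order $1$ or $2$ is in $\cE_3$ (one of order $1$ is an affine geometry, every induced restriction of which is again affine or empty hence of even size; one of order $2$ is a complement of a corank-$2$ flat, and one checks directly — or via Lemma~\ref{recognisebb} applied to flats — that every plane induced restriction has even size), and a semidoubling is a twist doubling by a matroid in $\cE_2$, so Lemma~\ref{dothetwist} shows that semidoublings preserve membership in $\cE_3$. Hence any matroid built from such a base by semidoublings lies in $\cE_3$.

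For the forward implication, let $M = (E,G) \in \cE_3$ be nonempty; I argue by induction on $n = \dim(M)$. If $n \le 2$ then $M$ is itself a Bose-Burton geometry unless it has a triangle with exactly one or exactly two ground-set elements; in the low-dimensional cases one can check by hand that either $M$ is already a Bose-Burton geometry of order $1$ or $2$, or it is a semidoubling of a smaller Bose-Burton geometry of order $1$ or $2$ — this is the base case, and I would treat $\dim(M)\in\{0,1,2\}$ explicitly. For $n \ge 3$: since $M \in \cE_3$ and $E = G$ would force an $F_7$-restriction, we have $E \ne G$, so pick $w \in G - E$ and a hyperplane $H$ of $G$ with $w \notin H$. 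By Corollary~\ref{e3grow}, $M$ is a doubling or a semidoubling of $M|H$, and $M|H \in \cE_3$ with smaller dimension, so by induction $M|H$ arises from a Bose-Burton geometry $B$ of order $1$ or $2$ by a sequence of semidoublings. If $M$ is a semidoubling of $M|H$ we are immediately done by prepending this to the sequence. The remaining case is that $M$ is a (genuine) doubling of $M|H$. Here I would invoke Lemma~\ref{findsemidoubler}: if $M|H$ is \emph{not} a Bose-Burton geometry, then $M$ is forced to be a semidoubling of $M|H$ after all — contradicting that it is a doubling but not a semidoubling — so in the outstanding case $M|H$ \emph{is} a Bose-Burton geometry, say of order $k$. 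Since $M|H \in \cE_3$ has dimension $\ge 2$, the argument in the proof of Corollary~\ref{findsemidoubler1} (no $F_7$-restriction inside $E$) gives $k \le 2$ once $\dim(H)\ge 3$, with the small-dimensional subcases handled directly; and the doubling of a Bose-Burton geometry of order $k$ is a Bose-Burton geometry of order $k$. Thus $M$ itself is a Bose-Burton geometry of order $\le 2$, and the trivial (empty) sequence of semidoublings suffices.

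The main obstacle I anticipate is the bookkeeping around the ``$M$ is a doubling'' branch: one must be sure that when $M|H$ is a Bose-Burton geometry, the order has not been pushed above $2$ by iterated doublings. The clean way around this is the observation that doubling preserves both the property of being a Bose-Burton geometry \emph{and} its order (the doubling of $\BB(n-1,2;k)$ is $\BB(n,2;k)$, stated in the excerpt), together with the fact — used already in Corollary~\ref{findsemidoubler1} — that $M \in \cE_3$ prohibits an $F_7$-restriction inside $E$, which caps the order at $2$. So rather than tracking the whole construction sequence, I would phrase the induction as: every nonempty $M \in \cE_3$ is either a Bose-Burton geometry of order $\le 2$, or a semidoubling of a smaller nonempty matroid in $\cE_3$; iterating this dichotomy and noting it terminates (dimension strictly decreases) yields the theorem, with Lemma~\ref{findsemidoubler} being exactly the tool that rules out the problematic ``doubling of a non-Bose-Burton matroid'' configuration.
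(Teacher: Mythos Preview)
Your approach is correct and essentially the same as the paper's: both induct on dimension and use Lemma~\ref{findsemidoubler} as the engine, reducing to the dichotomy ``$M$ is a Bose-Burton geometry of order at most $2$, or $M$ is a semidoubling of some $M|H$''. The paper is just more direct about reaching that dichotomy: rather than first invoking Corollary~\ref{e3grow} for an arbitrary hyperplane and then splitting on doubling versus semidoubling, it checks whether $G$ has a triangle $T$ with $|T\cap E|=1$; if not, $M$ is a Bose-Burton geometry (of order $\le 2$ since $M\in\cE_3$ forbids $F_7$), and if so, it chooses $H$ to \emph{contain} that triangle, so $M|H$ is visibly not a Bose-Burton geometry and Lemma~\ref{findsemidoubler} applies immediately. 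Your detour through the doubling branch recovers the same conclusion but adds bookkeeping you then have to undo.

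One small gap to patch: in your semidoubling branch you need $M|H$ nonempty to apply the induction hypothesis, but your choice of $w\in G-E$ and $H\not\ni w$ does not guarantee this (a semidoubling of the empty matroid is nonempty). The fix is trivial: since $M$ is nonempty, pick $e\in E$ and choose $H$ with $e\in H$ and $w\notin H$. The paper's choice of $H$ (containing a triangle with one element of $E$) automatically handles this as well.
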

\begin{proof}
	Each Bose-Burton geometry of order at most $2$ and dimension at least $2$ arises from a dimension-$2$ matroid by a sequence of doublings, so the backwards implication follows directly from Corollary~\ref{weake3structure}. Conversely, let $M = (E,G) \in \cE_3$, and assume inductively that $\dim(M) > 2$ and the result holds for lower dimension. If $G$ has no triangle $T$ with $|T \cap E| = 1$, then $M$ is a Bose-Burton geometry, and since $M \in \cE_3$ the order is at most $2$, so $M$ satisfies the theorem. If $G$ has such a triangle $T$, let $H$ be a hyperplane of $G$ containing $T$; now $T$ certifies that $M|H$ is not a Bose-Burton geometry, and Lemma~\ref{findsemidoubler} gives that $M$ is a semidoubling of $M|H$. The result follows inductively. 
\end{proof}

\subsection*{Critical number}

We now analyse how critical number grows with semidoublings.

\begin{lemma}\label{pushchi}
	Let $M = (E,G)$ be a semidoubling of $M_0 = (E \cap G_0,G_0)$ with respect to a hyperplane $H_0$ of $G_0$. Then $\chi(M) = \chi(M|H_0) + 1$.  
\end{lemma}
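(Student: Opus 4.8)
The plan is to prove the two inequalities $\chi(M)\le \chi(M|H_0)+1$ and $\chi(M)\ge \chi(M|H_0)+1$ separately; throughout put $c=\chi(M|H_0)$. Write $M=(E,G)$, let $G_0$ be the hyperplane of $G$ with $M|G_0=M_0$, and let $w\in G-G_0$ be the element realising the semidoubling, so that $M=\twistw{M_0}{N}{(G,w)}$ with $N=(G_0-H_0,G_0)$. Unwinding the definition of twist doubling, $E=(E\cap G_0)\cup\bigl(w+((E\cap G_0)\,\Delta\,(G_0-H_0))\bigr)$ and $w\notin E$. A short manipulation of symmetric differences then gives the two local properties that carry the argument: (a) for each $x\in H_0$ we have $x\in E$ if and only if $x+w\in E$, equivalently $E\cap(w+H_0)=w+(E\cap H_0)$; and (b) for each $x\in G_0-H_0$, exactly one of $x,x+w$ lies in $E$. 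We will also use the trivial identity $E\cap H_0=E(M|H_0)$.

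It is convenient to work with the flat $P=\cl(H_0\cup\{w\})=H_0\cup\{w\}\cup(w+H_0)$, which is a hyperplane of $G$ satisfying $P\cap G_0=H_0$ and $G-P=(G_0-H_0)\cup(w+(G_0-H_0))$. By property (a) and $w\notin E$, the restriction $M|P$ is exactly the doubling of $M|H_0$ by $w$, so Lemma~\ref{doublinggood} gives $\chi(M|P)=\chi(M|H_0)=c$. Since $P$ is a hyperplane of $G$, the standard inequality $\chi(M)\le\chi(M|P)+1$ yields $\chi(M)\le c+1$.

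For the reverse inequality, let $F$ be a flat of $G$ with $F\cap E=\emptyset$ and $\codim_G(F)=\chi(M)$, and set $k=\codim_G(F)$; we must show $k\ge c+1$. First suppose $w\in F$. I claim $F\subseteq P$: if not, pick $v\in F\setminus P$, and using $w\in F$ replace $v$ by $v+w$ if necessary so that $v\in F\cap(G_0-H_0)$; then $v+w\in F$ as well, so by (b) one of $v,v+w$ lies in $E$, contradicting $F\cap E=\emptyset$. Thus $F$ is a flat of the hyperplane $P$ that avoids $E(M|P)$, so $\codim_P(F)\ge\chi(M|P)=c$ and $k=\codim_P(F)+1\ge c+1$. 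Now suppose $w\notin F$. Let $F'=\cl(F\cup\{w\})=F\cup\{w\}\cup(w+F)$ and $F_0=F'\cap G_0$; since $w\in F'\setminus G_0$, $F_0$ is a hyperplane of $F'$, and $\dim F'=\dim F+1$, so $\codim_{G_0}(F_0)=k-1$. The key claim is that $F_0\cap H_0=F'\cap H_0$ avoids $E(M|H_0)$. Indeed, for $y\in F'\cap H_0$, since $w\in F'$ either $y\in F$ --- so $y\notin E$, whence $y\notin E\cap H_0=E(M|H_0)$ --- or $y+w\in F$ --- so $y+w\notin E$, whence $y\notin E$ by (a), and again $y\notin E(M|H_0)$. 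Therefore $\codim_{H_0}(F_0\cap H_0)\ge\chi(M|H_0)=c$, while $\codim_{H_0}(F_0\cap H_0)\le\codim_{G_0}(F_0)=k-1$ irrespective of whether $F_0\subseteq H_0$; hence $k\ge c+1$, completing the proof.

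The computations proving (a) and (b), and all of the codimension bookkeeping, are routine; the step I expect to require the most care is the reverse inequality --- specifically excluding the configuration $w\in F$ with $F\not\subseteq P$, and then, in the case $w\notin F$, verifying that the flat $F_0\cap H_0$ genuinely misses $E(M|H_0)$, which breaks into the sub-cases $y\in F$ and $y+w\in F$, handled respectively via the identity $E\cap H_0=E(M|H_0)$ and property (a).
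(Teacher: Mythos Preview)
Your proof is correct and follows essentially the same geometric idea as the paper's: work with the hyperplane $P=\cl(H_0\cup\{w\})$, exploit the doubling/antidoubling properties (your (a) and (b)) across the two cosets of $H_0$, and reduce to a flat inside $H_0$ witnessing $\chi(M|H_0)$. Two small organisational differences are worth noting: you obtain the upper bound $\chi(M)\le c+1$ by observing that $M|P$ is the doubling of $M|H_0$ and quoting Lemma~\ref{doublinggood}, which is a clean shortcut compared to the paper's direct flat construction; and for the lower bound you split into the cases $w\in F$ and $w\notin F$ rather than first reducing to a flat through $w$ as the paper does, but the content of each case matches the paper's claim and the argument that follows it.
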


\begin{proof}
	Let $k = \chi(M_0)$ and $n = \dim(G_0)$ and suppose that $M$ is the semidoubling of $M_0$ by $w$ with respect to $H_0$. Since $G_0$ is a hyperplane of $G$ and $H_0$ is a hyperplane of $G_0$, we have $\chi(M|H_0) \in \{k-1,k\}$ and $\chi(M) \in \{k,k+1\}$; it is enough to show that $\chi(M|H_0) \le k-1$ if and only if $\chi(M) \le k$. 
	
	Suppose that $\chi(M|H_0) \le k-1$. Let $F_0$ be a $(k-1)$-codimensional flat of $H_0$ for which $F_0 \cap E = \varnothing$. So $\dim(F_0) = (n-1)-(k-1) = n-k$. Therefore the flat $\cl(F_0 \cup \{w\})$ has dimension $n-k+1$; since $F_0 \subseteq H_0$ the definition of semidoubling implies that $\cl(F_0 \cup \{w\})$ is disjoint from $E$. Since $n-k+1 = \dim(G) - k$, it follows that $\chi(M) \le k$ as required. 
	
	Suppose, therefore, that $\chi(M) \le k$. 
	\begin{claim}
		There is a $k$-codimensional flat $F_2$ of $G$, containing $w$, such that $F_2 \cap E = \varnothing$. 
	\end{claim}
	\begin{proof}[Subproof:]
		Since $\chi(M) \le k$, there is a $k$-codimensional flat $F_2'$ of $G$ for which $F_2' \cap E = \varnothing$; we may assume that $w \notin F_2'$. Since $H = \cl(H_0 \cup \{w\})$ is a hyperplane of $G$, the flat $L = F_2' \cap H$ has dimension at least $\dim(F_2') -1$. Now the flat $L$ is disjoint from $E$ and is contained in $\cl(H_0 \cup \{w\})$, so by the definition of semidoubling, the flat $F_2 = \cl(L \cup \{w\})$ is also disjoint from $E$. But $w \notin F_2'$ and $L \subseteq F_2'$, so $w \notin L$ and therefore $\dim(F_2) \ge \dim(L) + 1 \ge \dim(F_2') = n+1-k$. The claim follows. 
	\end{proof}
	
	We now argue that $F_2 \subseteq \cl(H_0 \cup \{w\})$; if not, then $F_2$ contains a triangle containing $w$ and an element of $G_0-H_0$, but any such triangle intersects $H_0$ in exactly one element, so by the definition of semidoubling contains exactly one element of $E$, contradicting $F_2 \cap E = \varnothing$. Therefore $F_2 \subseteq \cl(H_0 \cup \{w\})$, so the flat $F_1 = F_2 \cap H_0$ has dimension $(n+1-k)-1 = n-k$, and is contained in $H_0$. Since $F_1$ is disjoint from $E$, this gives $\chi(M|H_0) \le \dim(M_0) - \dim(F_1) = (n-1)-(n-k) = k-1$, as required. 
\end{proof}

In particular, the above lemma applies when $M_0$ is itself a semidoubling or doubling of $M|H_0$. We use this fact to prove the next lemma, which gives a best-possible bound on $\chi$ for matroids in $\cE_3$. This lemma easily implies Theorem~\ref{gsfalse}.

\begin{lemma}\label{chibound}
	If $M \in \cE_3$ and $\dim(M) = n$, then $\chi(M) \le \floor{\tfrac{1}{2}n} + 1$. For all $n \ge 1$ there is some $M \in \cE_3$ where equality holds.
\end{lemma}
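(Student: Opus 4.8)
\emph{Overall approach.} I would prove both parts by induction on $\dim(M)$, using the doubling/semidoubling description of $\cE_3$ from Section~\ref{evensection} together with the exact formula for how a semidoubling changes critical number (Lemma~\ref{pushchi}). The guiding principle is that a doubling buys one extra dimension ``for free'' (Lemma~\ref{doublinggood} says it does not change $\chi$), whereas a semidoubling, by Lemma~\ref{pushchi}, trades two dimensions for one unit of critical number.

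\emph{The upper bound.} When $\dim(M) \le 2$ the bound is immediate from $\chi(M) \le r(M) \le \dim(M)$. Now suppose $\dim(M) = n \ge 3$ and $M = (E,G) \in \cE_3$. First observe that $M$ is not a projective geometry: otherwise it is an induced restriction of itself of dimension $n \ge 3$ with $|E| = 2^n-1$ odd, contradicting $M \in \cE_3$. Hence there is some $w \in G-E$; choosing a hyperplane $H$ of $G$ with $w \notin H$ we get $G-H \not\subseteq E$, so Corollary~\ref{e3grow} applies and $M$ is either a doubling or a semidoubling of $M|H$. If $M$ is a doubling of $M|H$, then $\chi(M) = \chi(M|H)$ by Lemma~\ref{doublinggood}, and $M|H \in \cE_3$ has dimension $n-1$, so induction finishes this case. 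If instead $M$ is a semidoubling of $M|H$ with respect to some hyperplane $H_0$ of $H$, then $\chi(M) = \chi(M|H_0) + 1$ by Lemma~\ref{pushchi}; since $M|H_0 \in \cE_3$ has dimension $n-2$, induction gives $\chi(M|H_0) \le \floor{\tfrac12(n-2)} + 1 = \floor{\tfrac12 n}$, whence $\chi(M) \le \floor{\tfrac12 n} + 1$.

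\emph{Sharpness.} I would build the extremal matroids by a single recursive ``$+2$ dimensions, $+1$ to $\chi$'' step. Set $P_1 = I_1$ and $P_2 = \PG(1,2)$, so $\chi(P_1) = 1$, $\chi(P_2) = 2$, and both lie in $\cE_3$ trivially (no induced restriction of dimension at least $3$). For $n \ge 3$, let $Q$ be the doubling of $P_{n-2}$, let $G_0$ be the hyperplane of $G(Q)$ with $Q|G_0 \cong P_{n-2}$, and let $P_n$ be the semidoubling of $Q$ with respect to $G_0$. Then $\dim(P_n) = n$, and $P_n \in \cE_3$: a doubling is a twist doubling by the empty matroid and a semidoubling is one by an affine geometry, both of which lie in $\cE_2$, so Lemma~\ref{dothetwist} applies at each step. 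Finally, since $Q|G_0 \cong P_{n-2}$ we have $\chi(Q|G_0) = \chi(P_{n-2})$, so Lemma~\ref{pushchi} gives $\chi(P_n) = \chi(Q|G_0) + 1 = \chi(P_{n-2}) + 1$; together with the values at $n \in \{1,2\}$ this unwinds to $\chi(P_n) = \floor{\tfrac12 n} + 1$.

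\emph{Main obstacle.} Given the lemmas already in place, neither direction is technically deep. The upper bound is essentially forced once one has Corollary~\ref{e3grow} and Lemma~\ref{pushchi}; the only care needed is the floor arithmetic in the two cases and the remark that a member of $\cE_3$ of dimension at least $3$ is never a projective geometry. The one genuinely creative point is the sharpness construction: one must hit on the idea of alternating a $\chi$-preserving doubling with a semidoubling performed \emph{with respect to the very hyperplane created by that doubling}, so that the restriction appearing in Lemma~\ref{pushchi} is exactly the smaller extremal example $P_{n-2}$ whose critical number is already known.
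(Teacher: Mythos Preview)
Your proof is correct and follows the same overall plan as the paper: induct on dimension using the doubling/semidoubling description of $\cE_3$ together with Lemma~\ref{pushchi}. The differences are cosmetic. For the upper bound, the paper invokes Theorem~\ref{e3structure} to reduce to the dichotomy ``$M$ is a semidoubling'' versus ``$M$ is a Bose--Burton geometry of order at most $2$''; you instead use Corollary~\ref{e3grow} to get the dichotomy ``doubling'' versus ``semidoubling,'' which is a slightly lighter tool and works just as well. For sharpness, the paper goes from dimension $n$ to $n+2$ by two successive \emph{semidoublings}, using $\chi(M|H) \ge \chi(M)-1$ to handle the intermediate odd-dimensional step (and then appealing to the already-proved upper bound to pin down equality there). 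Your construction instead uses a \emph{doubling followed by a semidoubling}, which has the minor advantage that the value of $\chi$ is computed exactly at every stage without needing the upper bound to close an inequality. Either route yields an extremal family; neither buys anything substantive over the other.
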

\begin{proof}
	The bound is trivial for $\dim(M) \le 1$; suppose that $\dim(M) > 1$ and it holds for matroids of smaller dimension. If $M$ is a semidoubling of some restriction $M|H$ with respect to a hyperplane $H_0$ of $H$, then Lemma~\ref{pushchi} gives $\chi(M) = \chi(M|H) + 1 \le \floor{\tfrac{1}{2}(n-2)} + 2 = \floor{\tfrac{1}{2}n} + 1$ as required. Otherwise, by Theorem~\ref{e3structure} the matroid $M$ is a Bose-Burton geometry of order $k \in \{1,2\}$, giving $\chi(M) = k = \floor{\tfrac{1}{2}k}+1 \le \floor{\tfrac{1}{2}n}+1$, so the bound holds.
	
	Projective geometries satisfy the bound with equality in dimension at most $2$; let $n \ge 2$ be even and suppose that equality holds for $M = (E,G) \in \cE_3$; i.e. $\chi(M) = \tfrac{1}{2}n + 1$. Let $H$ be a hyperplane of $G$ and let $M' = (E',G')$ be a semidoubling of $M$ with respect to $H$. Now $\dim(M') = n+1$ and Lemma~\ref{pushchi} gives $\chi(M') = \chi(M|H) + 1 \ge \chi(M) = \floor{\tfrac{1}{2}(n+1)} + 1$, so $M'$ satisfies the bound with equality. Let $M''$ be the semidoubling of $M'$ with respect to the hyperplane $G$ of $G'$. We have $\dim(M'') = n+2$ and Lemma~\ref{pushchi} gives $\chi(M'') = \chi(M) + 1 = \tfrac{1}{2}n+2 = \floor{\tfrac{1}{2}(n+2)} +1$, so $M''$ satisfies the bound with equality. An inductive argument implies that for all $n \ge 1$ the bound is satisfied with equality. 
\end{proof}

\subsection*{Universality}

We now prove that each $M \in \cE_3$ contains as an induced restriction every matroid $N \in \cE_3$ of dimension at most $\chi(M)-4$. This is done by proving the following stronger statement. 

\begin{lemma}
	Let $M = (E,G)$ and $N = (E',G')$ be matroids in  $\cE_3$ and let $H,H'$ be hyperplanes of $G,G'$ respectively. If $\chi(M) \ge \dim(N) + \min(4,\chi(N) + 2)$, then there is an induced embedding $\varphi$ of $N$ in $M$ for which $\varphi(G') \cap H = \varphi(H')$.
\end{lemma}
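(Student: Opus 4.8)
The plan is to prove the statement by induction on $\dim(N)$. Since $\chi(N)\ge 0$ we always have $\chi(M)\ge \dim(N)+2\ge 3$, so by Corollary~\ref{findsemidoubler1} we may fix a representation of $M$ as a semidoubling of $M|H$ with respect to some hyperplane $H_0$ of $H$, effected by a point $w_M\in G-(E\cup H)$. (The existence of such a $w_M$ for every hyperplane already shows that both $E$ and $G-E$ span $G$.) By Lemma~\ref{pushchi}, $\chi(M|H_0)=\chi(M)-1$, and hence also $\chi(M|H)\ge\chi(M)-1$. Write $N_0:=N|H'$; this is a restriction, so $N_0\in\cE_3$ and $\chi(N_0)\le\chi(N)$. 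The base cases $\dim(N)\le 1$ amount to locating a single point on the prescribed side of $H$ that is in or out of $E$ as demanded, which is possible precisely because $E$ and $G-E$ span $G$.

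For the inductive step let $\dim(N)=n'\ge 2$ and examine how $N$ sits over $N_0$. \textbf{Case 1: $G'-H'\not\subseteq E'$.} Then by Corollary~\ref{e3grow}, $N$ is a doubling or a semidoubling of $N_0$, effected by a point $w_0'\in G'-(E'\cup H')$. If it is a doubling, then $\chi(N_0)=\chi(N)$ by Lemma~\ref{doublinggood}, so the critical-number hypothesis permits applying the induction hypothesis to $N_0$ and $M|H_0$; this yields an induced embedding $\psi\colon H'\to H_0$ of $N_0$, and I would set $\varphi|_{H'}=\psi$ and $\varphi(w_0')=w_M$. If it is a semidoubling with respect to a hyperplane $K'$ of $H'$, then I would instead apply the induction hypothesis to $N_0$ and $M|H$ with the hyperplanes $K'$ of $G(N_0)$ and $H_0$ of $G(M|H)$, obtaining an induced embedding $\psi\colon H'\to H$ of $N_0$ with $\psi(H')\cap H_0=\psi(K')$, and again extend by $\varphi(w_0')=w_M$. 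In either subcase a short computation with the defining formulas of the two semidoublings shows that $\varphi$ is an induced embedding of $N$: on $\varphi(H')$ this is inherited from $\psi$, while on the coset $\varphi(G')\setminus\varphi(H')$ one matches the ``flip pattern'' of $M$ across $w_M$ against that of $N$ across $w_0'$ — this is exactly where $\psi(H')\subseteq H_0$ (doubling subcase) or $\psi(H')\cap H_0=\psi(K')$ (semidoubling subcase) is used. Finally $\psi(H')\subseteq H$ and $w_M\notin H$ force $\varphi(G')\cap H=\psi(H')=\varphi(H')$.

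\textbf{Case 2: $G'-H'\subseteq E'$.} This is the crux, since then $N$ is neither a doubling nor a semidoubling of $N_0$, so the reduction of Case~1 fails; moreover one cannot simply peel off a different hyperplane of $G'$, because the requirement $\varphi(G')\cap H=\varphi(H')$ forces the image of the added point of $G'$ to lie in $H$, and the behaviour of $\varphi(G')$ outside $H$ is then governed by the uncontrolled flip pattern of the semidoubling. Instead one reduces, by essentially the same extension argument as in Case~1, to the following: there exist a flat $\Phi\subseteq H$ with $\dim(\Phi)=n'-1$ and $M|\Phi\cong N_0$, together with a point $w_\ast\in G-H$, such that $\cl(\Phi\cup\{w_\ast\})\setminus\Phi\subseteq E$. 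Given these, mapping $H'$ onto $\Phi$ by an isomorphism and sending a chosen point of $G'-H'$ to $w_\ast$ produces the desired $\varphi$, because every point of the added coset then lies in $E$ (as needed, since $G'-H'\subseteq E'$), and $\varphi(G')\cap H=\Phi=\varphi(H')$ as before. Passing the coset condition through the semidoubling of $M$ over $H_0$, it becomes the condition that $\Phi$ extends to an $n'$-dimensional flat of $H$ whose complementary coset lies in $S:=\{y\in H:\ y+w_M\in E\}$, equivalently is disjoint from $E(M|H)\,\Delta\,H_0$.

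To produce such a $\Phi$, I would use that $\chi(M|H)=\chi(M)-1$ is large, so by Theorem~\ref{e3structure} the matroid $M|H$ arises from a Bose--Burton geometry of order at most $2$ by a long chain of semidoublings; one embeds $N_0$ inside a suitably early stage of this chain and then exploits the later semidoublings to build, above the image of $N_0$, a coset contained in $S$. Coordinating the embedding of $N_0$ with this layered structure of $M|H$ relative to the fixed hyperplane $H_0$ is the step I expect to be the most delicate, and it is the main obstacle to a clean write-up; the rest is bookkeeping with the twist-doubling formulas already used in the proofs of Lemmas~\ref{dothetwist} and~\ref{pushchi}.
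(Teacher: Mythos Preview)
Your Case~1 (the doubling and semidoubling subcases) is correct and is essentially the paper's argument for those two cases. The genuine gap is your Case~2, which you rightly identify as the crux but do not resolve; your sketch (``embed $N_0$ inside a suitably early stage of the semidoubling chain of $M|H$ and build a coset in $S$ above it'') is vague, and in fact amounts to re-proving the lemma itself for $N|H'$ together with an additional coset condition that your induction hypothesis does not provide.

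The paper's device here is short and you are missing it. First observe that when $G'-H'\subseteq E'$, every triangle $T$ of $H'$ sits in a plane of $G'$ meeting $E'$ in $|T\cap E'|+4$ points, so $N|H'\in\cE_2$; hence $\chi(N|H')\le 1$ and $\chi(N)\le 2$. Now set $N_0=(E'\setminus(G'-H'),\,G')$: this has the \emph{same} dimension as $N$, lies in $\cE_3$ (each plane of $G'$ meets $G'-H'$ in $0$ or $4$ points), and has $\chi(N_0)=\chi(N|H')=\chi(N)-1\le 1$. Apply the induction hypothesis to the pair $(M|H,N_0)$ with hyperplanes $H_0$ of $H$ and $H'$ of $G'$: since $\chi(N)\le 2$, one checks $\chi(M|H)\ge \dim(N_0)+\chi(N_0)+2$, so this is legitimate. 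The resulting embedding $\varphi_0$ satisfies $\varphi_0(G')\cap H_0=\varphi_0(H')$, and one then defines $\varphi=\varphi_0$ on $H'$ and $\varphi=w_M+\varphi_0$ on $G'-H'$. Because $\varphi_0(G'-H')$ avoids both $E$ and $H_0$, the semidoubling of $M$ sends each such point into $E$, giving exactly the coset condition you were trying to manufacture by hand.

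The reason this works and your induction on $\dim(N)$ cannot accommodate it is that $\dim(N_0)=\dim(N)$; the paper instead inducts on $\dim(N)+\chi(N)$, which drops by one in all three cases. Once you change the induction variable and use the ``subtract the full coset'' trick above, Case~2 becomes as routine as your Case~1.
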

\begin{proof}
	We may assume that $\dim(N) \ne 0$, so $\chi(M) \ge 3$;  Corollary~\ref{findsemidoubler1} implies that there is some $w \in G-(H \cup E)$ for which $M$ is the semidoubling of $M|H$ by $w$ with respect to some hyperplane $H_0$ of $H$.

	If $\dim(N) = 1$ and $\chi(N) = 0$ then, since $w \notin E \cup H$ and $H' = \varnothing$, the map sending the single element of $G'$ to $w$ is the required induced embedding. We proceed by induction on $\dim(N) + \chi(N)$. In particular, we may assume that the lemma holds for $N|H'$. By Corollary~\ref{e3grow}, either $G'-H' \subseteq E'$, or $N$ is a doubling or semidoubling of $N|H'$. We distinguish these three cases. 
	
	\textbf{Case 1:} $G'-H' \subseteq E'$. 
	
	Let $N_0 = (E'-(G'-H'),G')$; note that $\chi(N_0) = \chi(N|H') = \chi(N)-1$, and that $N_0 \in \cE_3$, since every plane of $G'$ contains either zero or four elements of $G'-H'$. Every triangle $T$ of $H'$ is contained in a plane $P$ of $G'$ containing $|T \cap E'| + 4$ elements of $E'$, so we must have $|T \cap E'|$ even for every such triangle. Thus $N|H' \in \cE_2$, which gives $1 \ge \chi(N|H') = \chi(N_0)$ and $\chi(N) \le 2$.
	
	The matroid $M|H$ satisfies 
	\begin{align*}
	\chi(M|H) &\ge \chi(M)-1 \\
	&\ge \dim(N) + \min(4,\chi(N)+2)-1 \\
	& = \dim(N_0) + \chi(N) + 1 \\
	& = \dim(N_0) + \chi(N_0) + 2.
	\end{align*}
	Since $\chi(N_0) \le 1$, inductively there is an induced embedding $\varphi_0$ of $N_0$ in $M|H$ for which $\varphi(G') \cap H_0 = \varphi(H')$. Let $x \in G'-H'$ and let $\varphi\colon G' \to G$ be defined by 
	\[\varphi(x) = \begin{cases}\varphi_0(x), & \text{if    } x \in H', \\ w + \varphi_0(x), & \text{if    }  x \in G'-H'.\end{cases}\]
	Since $w \notin H$ and $\varphi(H') \subseteq H$, it is immediate that $\varphi$ is a linear injection that maps $H'$ to $H$ and $G'-H'$ to $G-H$. Since $N_0$ contains no elements of $G'-H'$ and $\varphi_0$ is an induced embedding of $N_0$ in $M|H$, for each $x \in G'-H'$ we have $\varphi_0(x) \notin E$. Since $M$ is a semidoubling of $M|H$ and $\varphi_0(x) \notin H_0$ for each such $x$, we thus have $E \ni w + \varphi_0(x) = \varphi(x)$. So $\varphi(G'-H') \subseteq E$. Since $G'-H' \subseteq E'$ while $\varphi$ agrees with $\varphi_0$ on $H'$, it follows that $\varphi$ is the required induced embedding of $N$ in $M$.

	\textbf{Case 2:} $N$ is a doubling of $N|H'$.
	
	Let $w' \in G'-(E' \cup H')$ be such that $N$ is the doubling of $N|H'$ by $w'$. By the definition of semidoubling, the hyperplane $H_0^+ = \cl(H_0 \cup \{w\})$ of $G$ is such that $M|H_0^+$ is the doubling of $M|H_0$ by $w$, so 
	\begin{align*} 
	\chi(M|H_0) &= \chi(M|H_0^+) \\
	& \ge \chi(M)-1 \\
	& \ge \dim(N)+ \min(4,\chi(N)+2) - 1 \\
	& \ge \dim(N|H') + \min(4,\chi(N|H') + 2).
	\end{align*}
	Therefore, inductively, there is an induced embedding $\varphi_0$ of $N|H'$ in $M|H_0$. Let $\varphi\colon G' \to G$ be the linear injection extending $\varphi_0$ for which $\varphi(w') = w$. Clearly $\varphi(G') \cap H = \varphi(H')$. Since $N$ is the doubling of $N|H'$ by $w'$ while $M|H_0^+$ is the doubling of $M|H_0$ by $w$, it is easy to show that $\varphi$ is the required induced embedding of $N$ in $M$. 
	
	\textbf{Case 3:} $N$ is a semidoubling of $N|H'$. 
	
	Let $w' \in G'-H'$ and $H_0$ be a hyperplane of $H_0'$ for which $N$ is the semidoubling of $N|H'$ by $w'$ with respect to $H_0'$. Note that 
	\begin{align*}
	\chi(M|H) &\ge \chi(M) - 1 \\
	&\ge \dim(N)-1 + \min(4,\chi(N) + 2) \\
	&\ge \dim(N|H') + \min(4,\chi(N|H')+2).
	\end{align*}
	Inductively, there is an induced embedding $\varphi_0$ of $N|H'$ in $M|H$ for which $\varphi(H) \cap H_0 = \varphi(H_0')$. Let $\varphi\colon G' \to G$ be the linear injection extending $\varphi_0$ for which $\varphi(w') = w$; we argue that $\varphi$ is the required induced embedding of $N$ in $M$. It is immediate that $\varphi(G') \cap H = H'$. 
	
	Let $u \in G'$. If $u \in H'$ then $\varphi(u) = \varphi_0(u)$, which is in $E$ if and only if $u \in F$. If $u = w'$ then $u \notin F$ and $\varphi(u) = w \notin E$. Otherwise we have $u = x' + w$ for some $x' \in H'$. Now $\varphi(u) = \varphi_0(x') + w$, which is in $E$ if and only if $\varphi_0(x') \notin H_0 \Delta E$. By the choice of $\varphi_0$ we have $\varphi_0(x') \notin H_0 \Delta E$ if and only if $x' \notin H_0' \Delta E'$, which holds if and only if $x' + w' \in E'$. So for all $u \in G'$ we have $u \in E'$ if and only if $\varphi(u) \in E$, as required. 
\end{proof}

The following result, which easily implies Theorem~\ref{omnivorous}, is a cleaner corollary of the above. 

\begin{theorem}\label{hungry}
	If $M,N \in \cE_3$ and $\chi(M) \ge \dim(N) + 4$, then $M$ has an induced $N$-restriction.
\end{theorem}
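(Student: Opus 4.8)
The plan is to read off Theorem~\ref{hungry} directly from the preceding lemma, which has already been set up precisely for this purpose. Suppose $M = (E,G)$ and $N = (E',G')$ lie in $\cE_3$ and $\chi(M) \ge \dim(N) + 4$. First I would dispatch the trivial case $\dim(N) = 0$: then $N$ is the empty matroid on the empty geometry, and the empty linear injection witnesses an induced $N$-restriction in every matroid, so there is nothing to prove. Hence I may assume $\dim(N) \ge 1$, so that $G'$ has a hyperplane $H'$ (possibly the empty flat). Also $\chi(M) \ge \dim(N) + 4 \ge 5$, so in particular $M$ is nonempty and $\dim(G) \ge 1$, and I may fix a hyperplane $H$ of $G$.

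Now I would apply the preceding lemma with these $H$ and $H'$. Its hypothesis is $\chi(M) \ge \dim(N) + \min(4, \chi(N) + 2)$, and since $\min(4, \chi(N) + 2) \le 4$ holds no matter what $\chi(N)$ is, the assumed inequality $\chi(M) \ge \dim(N) + 4$ implies it. The lemma then yields an induced embedding $\varphi$ of $N$ in $M$ (indeed one with $\varphi(G') \cap H = \varphi(H')$, a refinement we do not need here), which is exactly the assertion that $M$ has an induced $N$-restriction.

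I do not anticipate any real obstacle in this deduction: all the difficulty has already been absorbed into the preceding lemma, whose induction on $\dim(N) + \chi(N)$ and three-case split (according to whether $N$ is obtained from $N|H'$ by filling in $G' - H'$, by a doubling, or by a semidoubling, via Corollary~\ref{e3grow}) is where the work lies. The only points requiring a line of care in the present argument are the degenerate case $\dim(N) = 0$ and the observation that $\chi(M) \ge \dim(N) + 4$ forces $M$ to have a hyperplane at all --- both handled above --- together with the elementary inequality $\min(4, \chi(N) + 2) \le 4$ that lets us weaken the lemma's hypothesis to the cleaner one stated here.
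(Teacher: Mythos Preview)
Your proposal is correct and matches the paper's approach exactly: the paper presents Theorem~\ref{hungry} as an immediate corollary of the preceding lemma, and your argument spells out the straightforward deduction, including the trivial inequality $\min(4,\chi(N)+2)\le 4$ and the boundary case $\dim(N)=0$.
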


\section{The main results}

In this section we prove Theorems~\ref{main1} and ~\ref{maintech}. We first need three lemmas that recognise affine geometries and similar matroids in the claw-free case. 

\begin{lemma}\label{recag1}
	If $M = (E,G)$ is a triangle-free matroid in which every triple of distinct elements is contained in a four-element circuit, then $(E,\cl(E))$ is an affine geometry.
\end{lemma}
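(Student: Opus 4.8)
The plan is, after reducing to the full-rank case, to identify the elements of $G$ with the nonzero vectors of $\bF_2^{n}$ (where $n=\dim(G)$) and to show that $E$ is a coset $e_0+W$ of a codimension-one subspace $W$ with $e_0\notin W$. Such a coset is exactly $G\del H$ for the hyperplane $H=W\del\{0\}$, which identifies $(E,G)$ as the affine geometry $\AG(n-1,2)$.

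First I would reduce to the case $G=\cl(E)$, i.e.\ $M$ full-rank: the induced restriction $M|\cl(E)$ equals $(E,\cl(E))$, is full-rank, and inherits both hypotheses, since circuits and triangles depend only on the ground set (we may also assume $E\ne\varnothing$). Next I would extract from the hypothesis that $E$ is closed under the ternary operation $(x,y,z)\mapsto x+y+z$: given distinct $x,y,z\in E$, a four-element circuit $C\subseteq E$ containing $\{x,y,z\}$ must be $\{x,y,z,w\}$ with $w=x+y+z$ (since the elements of a circuit sum to zero), so $x+y+z=w\in E$; the instances of this identity with a repeated argument hold trivially. I would note in passing that this already forces $M$ to be triangle-free, since a triangle of $G$ lying in $E$ would be a three-element circuit, and no circuit is contained in a larger one; so the triangle-free hypothesis is in fact redundant.

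Now fix $e_0\in E$ and set $W=\{e_0+x: x\in E\}$. I claim $W$ is an $\bF_2$-subspace: it contains $0=e_0+e_0$, and for $x,y\in E$ we have $(e_0+x)+(e_0+y)=x+y=e_0+(e_0+x+y)\in W$ by ternary closure. Hence $E=e_0+W$. Since $0\notin G\supseteq E$ while $e_0+e_0=0$, we get $e_0\notin W$. Finally, full-rankness gives $\mathrm{span}(E)=\bF_2^{n}$; as each $e_0+x$ ($x\in E$) and $e_0$ itself lie in $\mathrm{span}(E)$, while conversely $E\subseteq W+\bF_2 e_0$, we obtain $\bF_2^{n}=W\oplus\bF_2 e_0$ (direct since $e_0\notin W$), so $\dim W=n-1$. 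Then $H=W\del\{0\}$ is a hyperplane of $G$, and $E=e_0+W=\bF_2^{n}\del W=G\del H$, i.e.\ $(E,G)=\AG(n-1,2)$, as required.

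This argument is essentially routine; the only points needing a little care are the bookkeeping that the minimality clause in the definition of a four-element circuit genuinely pins the fourth element to $x+y+z$, the harmless degenerate cases $|E|\le 2$ (where the ternary condition is vacuous but the above goes through verbatim), and keeping the linear-algebraic reduction to $\cl(E)$ honest. I do not foresee any substantial obstacle beyond this.
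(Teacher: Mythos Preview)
Your proof is correct and takes essentially the same approach as the paper's: both translate $E$ by a fixed element $e_0$ to obtain a subspace $W=E+e_0$ and conclude that $E$ is a coset of $W$. The paper's version is terser and invokes the triangle-free hypothesis at the end to rule out the projective case, whereas you argue directly that $e_0\notin W$ and also correctly observe that the triangle-free hypothesis is redundant.
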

\begin{proof}
	The circuit condition given is equivalent to the statement that for all distinct $u,v,w \in E$, we have $u+v+w \in E$. Let $u_0 \in E$ and $W = E + u_0$, so $0 \in W$ and for all distinct nonzero $x,y \in W$ we have $x+y = ((x-u_0)+(y-u_0) + u_0)+u_0 \in W$ by the sum condition, so $W$ is a subspace of $\bF_2^n$. Therefore $(E,\cl(E))$ is a projective or affine geometry; the lemma follows from the fact that $M$ is triangle-free.
\end{proof}
\begin{corollary}\label{recag2}
	If $M = (E,G)$ is a triangle-free matroid with no induced $I_3$-restriction, then $(E,\cl(E))$ is an affine geometry. 
\end{corollary}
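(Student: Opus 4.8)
The plan is to deduce Corollary~\ref{recag2} from Lemma~\ref{recag1} by showing that, under the stated hypotheses, every triple of distinct elements of $E$ lies in a four-element circuit. So let $u,v,w$ be three distinct elements of $E$. Since $M$ is triangle-free, no two of them sum to the third, so $u+v+w \neq 0$; equivalently $\{u,v,w\}$ has rank $3$ and spans a plane $P = \cl(\{u,v,w\})$ of $G$. The plane $P$ contains the seven points $u,v,w,u+v,u+w,v+w,u+v+w$, and the three `sum' points $u+v,u+w,v+w$ are exactly the points lying on the lines through pairs of $\{u,v,w\}$; none of these lies in $E$ since $M$ is triangle-free. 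The remaining point is $u+v+w$, and I claim it must lie in $E$: otherwise $E \cap P = \{u,v,w\}$, so $M|P \cong I_3$ is an induced $I_3$-restriction of $M$, contradicting the hypothesis. Hence $u+v+w \in E$, and $\{u,v,w,u+v+w\}$ is a four-element circuit (its elements sum to zero and, by triangle-freeness, no proper subset does).

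Having established this, the corollary follows immediately: $M$ is triangle-free and every triple of distinct elements of $E$ lies in a four-element circuit, so Lemma~\ref{recag1} applies and gives that $(E,\cl(E))$ is an affine geometry.

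The main (and only real) obstacle is the bookkeeping around the degenerate possibility that $\{u,v,w\}$ does not have full rank $3$ — i.e. that $u+v+w = 0$ — but this is exactly ruled out by triangle-freeness, so there is nothing substantive to overcome; the argument is a direct case-free reduction to Lemma~\ref{recag1}. One could alternatively write it even more compactly by invoking the complementary form of Lemma~\ref{recognisebb} noted after its proof, but the plane-counting argument above is the cleanest self-contained route.

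\begin{proof}
	Let $u,v,w$ be distinct elements of $E$. Since $M$ is triangle-free, $u+v+w \neq 0$, so $P = \cl(\{u,v,w\})$ is a plane of $G$, and its points are $u,v,w$, the three points $u+v,u+w,v+w$, and $u+v+w$. Triangle-freeness gives $u+v,u+w,v+w \notin E$. If also $u+v+w \notin E$ then $E \cap P = \{u,v,w\}$, so $M|P$ is an induced $I_3$-restriction of $M$, a contradiction. Hence $u+v+w \in E$, so $\{u,v,w,u+v+w\}$ is a four-element circuit of $M$. Thus every triple of distinct elements of $E$ lies in a four-element circuit, and Lemma~\ref{recag1} gives that $(E,\cl(E))$ is an affine geometry.
\end{proof}
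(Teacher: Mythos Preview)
Your proof is correct and follows essentially the same approach as the paper's: both arguments take three distinct elements $u,v,w \in E$, use triangle-freeness to rule out $u+v,u+w,v+w \in E$, use the absence of an induced $I_3$ to force $u+v+w \in E$, and then invoke Lemma~\ref{recag1}. The only difference is cosmetic—you enumerate the seven points of the plane explicitly, while the paper argues more tersely that the extra element $x \in \cl(\{u,v,w\}) \cap E$ guaranteed by $I_3$-freeness must equal $u+v+w$.
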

\begin{proof}
	Let $u,v,w \in E$ be distinct. Since $\{u,v,w\}$ is not a triangle and $M$ has no induced $I_3$-restriction, there is some $x \in \cl(\{u,v,w\}) \cap E$ other than $u,v,w$. Since $M$ is triangle-free, we have $x \notin \{u+v,u+w,v+w\}$ and so $x = u+v+w$. Therefore $\{u,v,w,x\}$ is a four-element circuit. The corollary follows from Lemma~\ref{recag1}.
\end{proof}

Recall that, for $s \ge 2$, $\AGpp(s-1,2)$ denotes the unique matroid obtained by adding a single element to the affine geometry $\AG(s-1,2)$. 

\begin{lemma}\label{recogniseagplus}
	Let $M$ be an $n$-dimensional matroid with an $\AG(n-1,2)$-restriction. If $M$ does not contain $F_7$ or $I_3$ as an induced restriction, then either $M \cong \AG(n-1,2)$, or there exists $s \ge 2$ such that $M$ arises from $\AGpp(s-1,2)$ by a sequence of doublings. 
\end{lemma}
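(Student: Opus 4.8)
The plan is to induct on the dimension $n$, analysing the structure of $M$ relative to the given $\AG(n-1,2)$-restriction. Write $M = (E,G)$ with $A \subseteq E$ an affine-geometry restriction on the hyperplane complement; i.e.\ $G - A$ is a hyperplane $H$ of $G$. If $E = A$ we are in the first outcome, so assume there is some $x \in E \cap H$. The strategy is to show that $M$ must then be a doubling of a smaller matroid $M_0$ that still has an $\AG$-restriction of full rank, and to which the inductive hypothesis applies; the base case $n = 2$ is immediate since $\AG(1,2)$ has two elements and $\AGpp(1,2)$ three, and both are full-rank $2$-dimensional.

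The key step is to locate a point $w \in G - E$ witnessing that $M$ is a doubling. First I would observe that, since $A = G - H$ is affine and $M$ has no induced $I_3$-restriction, the extra elements of $E \cap H$ are severely constrained: for any $a, a' \in A$ and any $h \in H$, the triangle through two points is controlled by the affine structure, and three points of $A$ always close up to a fourth point of $A$ (as in Corollary~\ref{recag2}). Using the absence of induced $I_3$ and $F_7$, I would argue that if $|E \cap H| \ge 2$, say $h_1, h_2 \in E \cap H$ with $h_1 + h_2 = h_3 \in H$, then considering planes spanned by $h_1, h_2$ and points of $A$ forces $h_3 \in E$ as well (otherwise a plane meeting $E$ in the wrong pattern yields an induced $I_3$ or an induced $F_7$), so $E \cap H$ is itself a flat, or close to one. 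The crux is then to find $w \in G - (E \cup H)$ such that every triangle through $w$ meets $E$ in $0$ or $2$ points; equivalently, $w + (E \cap H) = E \cap (G - H) \cap (\text{something})$ — more precisely one wants $E$ to be invariant under translation by $w$ in the sense $E = (E \cap H) \cup (w + (E \cap H))$ after possibly re-choosing the hyperplane. I expect this to come out of a counting/parity argument on planes combined with the rigidity of the affine part: the affine geometry $A$ pairs up naturally, and one shows the part of $E$ in $H$ must be "mirrored" outside $H$ through a suitable $w$, or else a forbidden induced restriction appears.

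Once such a $w$ is found, set $M_0 = M|H'$ for a hyperplane $H'$ avoiding $w$; then $M$ is the doubling of $M_0$, and by Lemma~\ref{doublinggood} (applied with $N \in \{I_3, F_7\}$, neither of which is a doubling, since both have odd size) $M_0$ has no induced $I_3$ or $F_7$-restriction. One checks $M_0$ still contains a full-rank $\AG(n-2,2)$-restriction: the affine restriction $A$ of $M$ projects to an affine restriction of $M_0$ under the doubling correspondence. By induction $M_0 \cong \AG(n-2,2)$ or $M_0$ arises from some $\AGpp(s-1,2)$ by doublings; in the first case $M$, being the doubling of $\AG(n-2,2)$, is $\AG(n-1,2)$ itself, contradicting $E \ne A$ unless the doubling was non-trivial — so in fact we must be in the $\AGpp$ case after all, which we'd have to reconcile by noting that the doubling of $\AG(n-2,2)$ is $\AG(n-1,2)$ and hence $E \cap H = \varnothing$, handled at the outset. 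In the $\AGpp$ case, $M$ arises from $\AGpp(s-1,2)$ by one more doubling, completing the induction.

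\textbf{Main obstacle.} The hard part will be the middle step: proving that the presence of even a single element of $E$ in the hyperplane $H = G - A$ forces $M$ to be a doubling. This requires carefully exploiting the interaction of the affine structure of $A$ with the no-induced-$I_3$ and no-induced-$F_7$ hypotheses across all planes of $G$ that straddle $H$, and identifying the translation vector $w$ — essentially showing $E \cap H$ together with its behaviour in $A$ cannot be "lopsided". I would expect to need Lemma~\ref{recognisebb} or a parity count over planes to pin down that $E \cap H$ is a flat and that $E \setminus (E \cap H)$ is exactly a coset-translate of it, at which point Corollary~\ref{e3grow}-style reasoning (or a direct triangle check) delivers the doubling.
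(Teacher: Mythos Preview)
Your overall strategy---induct on $n$, show $M$ is a doubling of a smaller matroid with the same hypotheses---matches the paper's. However, the execution has a concrete error and misses the paper's key simplifying observation.

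First, you search for the doubling witness $w$ in $G - (E \cup H)$, but this set is empty: by hypothesis $G - H = A \subseteq E$, so every point outside $H$ already lies in $E$. The paper instead finds $w$ \emph{inside} $H$, specifically in the part of $H$ avoided by $E$.

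Second, your expectation that $E \cap H$ is ``a flat, or close to one'' via a parity/counting argument is off-target. The paper's clean route is: (i) $E \cap H$ is triangle-free, because any triangle $T \subseteq E \cap H$ together with a point $a \in A$ would give $\cl(T \cup \{a\}) \subseteq E$, an $F_7$-restriction; (ii) $M|H$ has no induced $I_3$ since $M$ does not; hence by Corollary~\ref{recag2}, $(E \cap H,\cl(E \cap H))$ is an affine geometry. So $E \cap H$ is the complement of a hyperplane $H_0$ inside its closure, not a flat. Now pick $w \in H_0$ (nonempty once $\dim \ge 2$) and verify directly that every triangle through $w$ meets $E$ in $0$ or $2$ points: triangles with two points in $A$ contribute $2$; triangles inside $H$ are controlled by the affine structure of $E \cap H$. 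The cases $\dim(E \cap H) \in \{0,1\}$ give $\AG(n-1,2)$ and $\AGpp(n-1,2)$ directly.

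In short, the ``hard middle step'' you flag as the main obstacle dissolves once you (a) look for $w$ in $H$ rather than outside it, and (b) use the no-$F_7$ hypothesis to see immediately that $E \cap H$ is triangle-free and hence affine. No parity count is needed.
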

\begin{proof}
	The result is trivial for $n = 1$; suppose that $n > 1$ and the theorem holds for smaller $n$. 
	
	Let $M = (E,G)$ and let $H$ be a hyperplane of $G$ so that $G-H \subseteq E$. Let $M_0 = M|H$. If $M_0$ contains an induced $I_3$-restriction then so does $M$. If $E \cap H$ contains a triangle $T$, then $\cl(T \cup \{w\})$ is an $F_7$-restriction of $M$ for any $w \in G-H$. Thus, Corollary~\ref{recag2} implies that $M_0' = (E \cap H,\cl(E \cap H))$ is an affine geometry. If $\dim(M_0') = 0$ then $M \cong \AG(n-1,2)$. If $\dim(M_0') = 1$ then $M \cong \AGpp(n-1,2)$. Otherwise, let $w$ be an element of the (nonempty) hyperplane $H_0$ of $H$ for which $E \cap H = H-H_0$; note that $w \notin E$. It is easy to see that every triangle of $G$ containing $w$ contains either two elements of $H-H_0$, two elements of $G-H$, or no elements of $E$. Thus $M$ is a doubling of some matroid $M_1$ by $w$. Since $w \in H$, the matroid $M_1$ is the restriction of $M$ to a hyperplane other than $H$, so $M_1$ has an $\AG(n-2,2)$-restriction. The result follows by induction. 
\end{proof}

We now prove the technical lemma from which our main structure theorem will follow; it states that when a hyperplane $H$ of an $(I_3,F_7)$-free matroid $(E,G)$ is partitioned according to the triangles containing some fixed $w \notin E \cup H$, then one of the parts is an affine geometry. 

\begin{lemma}\label{technical}
	Let $M = (E,G)$ be a full-rank, $n$-dimensional matroid with no induced $I_3$ or $F_7$-restriction. Let $w \in G-E$, 
	let $H$ be a hyperplane of $G$ that does not contain $w$, and for each $i \in \{0,1,2\}$, let $B_i = \{x \in H\colon |\{x,x+w\} \cap E| = i\}$. If $B_1$ and $B_2$ are both nonempty, then $(B_i,H) \cong \AG(n-2,2)$ for some $i \in \{1,2\}$.
\end{lemma}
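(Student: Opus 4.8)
The plan is to exploit the $(I_3,F_7)$-freeness to control how a triangle $T$ of $H$ meets the three parts $B_0,B_1,B_2$, and thereby to show that one of the parts, equipped with the induced triangle structure of $H$, is closed under symmetric differences in the relevant sense. First I would record the local constraint: if $\{x,y,z\}$ is a triangle of $H$ (so $x+y+z=0$), then $\{x+w,y+w,z+w\}$ together with $\{w\}$ spans the plane $P=\cl(\{x,y,z,w\})$ of $G$, and inside $P$ the elements of $E$ are $(\{x,y,z,x+w,y+w,z+w\}\cap E)$. Since $M$ has no induced $F_7$-restriction, $|E\cap P|\neq 7$; since $M$ has no induced $I_3$-restriction, $E\cap P$ cannot be a $3$-element independent set, and more generally the $7$ points of $P$ cannot meet $E$ in exactly a basis-like triple. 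Translating "which of $x,y,z$ lies in $B_0,B_1,B_2$" into "how many of $x,x+w$ (resp. $y,\dots$) lie in $E$", one gets a finite list of admissible patterns for $(i_x,i_y,i_z)\in\{0,1,2\}^3$ on each triangle. The key claim I would aim to extract is something like: \emph{on every triangle of $H$, the three parities $i_x,i_y,i_z$ cannot be all distinct}, or a closely related statement, forcing a "parity" or "affine" structure.

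The main technical step is then to show that $B_2$ (or $B_1$) is an affine subgeometry of $H$ of full rank, i.e.\ $(B_2,H)\cong\AG(n-2,2)$. Given the triangle-pattern analysis, I would try to show directly that no triangle $T$ of $H$ satisfies $|T\cap B_2|=1$ and no triangle is contained in $B_2$; then Lemma~\ref{recognisebb} (the second part) gives that $(B_2,H)$ is empty or an affine geometry, and nonemptiness of $B_2$ plus a full-rank argument (e.g.\ $B_2$ cannot lie in a hyperplane of $H$, else combined with $w$ one finds a forbidden induced restriction) pins it down to $\AG(n-2,2)$. If instead the admissible patterns force the analogous statement for $B_1$ rather than $B_2$, I would run the same argument with $B_1$; the hypothesis that \emph{both} $B_1,B_2$ are nonempty is what rules out the degenerate possibilities (e.g.\ $B_0=H$, or $M|H$ a Bose--Burton geometry with no interesting triangles) and guarantees enough triangles of each type to apply Lemma~\ref{recognisebb}.

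The hard part, I expect, is the bookkeeping in the triangle-pattern analysis: there are $\binom{3+2}{2}$-ish unordered patterns $(i_x,i_y,i_z)$ a priori, and for each one must check whether the resulting $6$-point configuration in the plane $P$ (which, up to the symmetry swapping $x\leftrightarrow x+w$ etc., is determined by the pattern together with the choice of which of $x,x+w$ is in $E$ when $i_x=1$) contains an induced $I_3$ or induced $F_7$. Some patterns will be forbidden outright; others (like $(2,2,2)$, $(2,0,0)$, $(1,1,0)$, $(2,1,1)$) will survive, and I need to verify these survivors are exactly consistent with "$B_2$ affine" — in particular that a surviving pattern never has $|T\cap B_2|=1$ while some other part is hit an odd number of times in a way that would let $B_2$ fail to be a flat. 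I would organise this by fixing the involution $\sigma$ on $H$ with $\sigma(x)$ chosen so that things line up (or simply by a parity/counting argument: sum $|E\cap P|$ contributions), and by using the already-available fact that $M$ being $(I_3,F_7)$-free is inherited by $M|P$ for every plane $P$. Once the pattern list is nailed down, the appeal to Lemma~\ref{recognisebb} and a short full-rank argument finish the proof.
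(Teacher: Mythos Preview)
Your approach has a genuine gap. The triangle-pattern analysis on triangles of $H$ (equivalently, on planes of $G$ through $w$) is too weak to yield the hypotheses of Lemma~\ref{recognisebb} for either $B_1$ or $B_2$. Indeed, your own list of survivors already shows this: the patterns $(2,0,0)$ and $(2,1,1)$ give triangles $T$ of $H$ with $|T\cap B_2|=1$, and $(2,2,2)$ gives a triangle contained in $B_2$; all three pass the local $(I_3,F_7)$-check in the plane $\cl(T\cup\{w\})$. Symmetrically, $(1,0,0)$ and $(1,2,2)$ give $|T\cap B_1|=1$, and $(1,1,1)$ can occur (it forces the three $E$-elements to form a triangle, but is not forbidden). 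So neither $B_1$ nor $B_2$ can be shown to be a Bose--Burton geometry in $H$ from plane-through-$w$ data alone, and the choice of which $B_i$ is affine is a global phenomenon, not a local one.

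What the paper actually does is different in two essential ways. First, it lifts the analysis one dimension higher: the decisive claim concerns \emph{planes} $P$ of $H$ (hence $4$-dimensional flats $\cl(P\cup\{w\})$ of $G$), where $F_7$-freeness has real bite; specifically, if $P\subseteq B_1\cup B_2$ then $P\cap B_2$ must be a triangle. This claim is then used to show that neither $(B_1,H)$ nor $(B_2,H)$ has an induced $I_3$, and that $(B_1,H)$ is triangle-free. Second, the paper invokes Corollary~\ref{recag2} (triangle-free plus $I_3$-free implies affine on its span), not Lemma~\ref{recognisebb}; this gives only that $(B_1,\cl(B_1))$ is affine, and a further case split on whether $\cl(B_1)=H$ is needed. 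When $\cl(B_1)\ne H$, a second round of the same reasoning (now using an element of $B_2\setminus\cl(B_1)$) shows $B_2$ is triangle-free and spans $H$. Your plan would need both the step up to $4$-dimensional flats and the switch from Lemma~\ref{recognisebb} to Corollary~\ref{recag2} to go through.
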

\begin{proof}
	Suppose that $(B_i,H) \not\cong \AG(n-2,2)$ for $i=1$ and $i=2$. Let $T$ be a triangle of $H$. If $T$ intersects each of $B_0,B_1,B_2$, then the plane $\cl(T \cup \{w\})$ contains exactly three elements of $M$ and these elements are not a triangle, so $M$ has an induced $I_3$-restriction, contrary to hypothesis. Thus 
	\begin{claim}\label{tripletriangles}
		No triangle of $H$ intersects all three of $B_0,B_1,B_2$. 
	\end{claim}
	We now make a claim restricting the structure of certain planes.
	\begin{claim}\label{denseplane}
		If $P$ is a plane of $H$ with $P \subseteq B_1 \cup B_2$, then $P \cap B_2$ is a triangle.
	\end{claim}
	\begin{proof}[Subproof:]
		
		Suppose not. Let $x_1, \dotsc, x_7$ be an ordering of $P$ for which $\{x_1,x_2,x_3\}$ contains a basis for $P \cap B_1$, and $x_7 = x_1+x_2+x_3$. Using the fact that $P -B_1$ is not a triangle, it is routine to check that whenever $i \ge 4$ and $x_i \in B_1$, we have $x_i = x_j + x_{j'}$ for some $j < j' < i$ such that $x_j,x_{j'} \in B_1$. 
		
		For each $1 \le i \le 7$, let $Z_i = \{x_i,w+x_i\}$ and $E_i = Z_i \cap E$. By hypothesis, we have $|E_i| = 1$ if $x_i \in B_1$ and $E_i = Z_i$ otherwise. The sets $Z_i$ form a partition of $\cl(P \cup \{w\}) - \{w\}$. We will show that there is a transversal $Y$ of $E_i \colon 1 \le i \le 7$ for which $r(Y) = 3$.
		
		Let $Y_3$ be any transversal of $E_1,E_2,E_3$. Since $r(\{x_1,x_2,x_3,w\}) = 4$, it is clear that $r(Y_3) = 3$ and that $w \notin \cl(Y_3)$. Let $k \in \{3,\dotsc,7\}$ be maximal so that there is a transversal $Y_k$ of $E_1,\dotsc,E_k$ for which $\cl(Y_k) = \cl(Y_3)$. If $k = 7$, then $Y_7$ is the required $Y$; suppose that $k \le 6$.
		
		Since $w \notin \cl(Y_k)$, the plane $\cl(Y_k)$ intersects the triangle $Z_{k+1} \cup \{w\}$ in a single element $y \in Z_{k+1}$. If $y \in E_{k+1}$ then, since $Y_k \cup \{y\}$ is a transversal of $E_1, \dotsc, E_{k+1}$ that is contained in $\cl(Y_k)$, we have a contradiction to the maximality of $k$. 
		
		If $x_{k+1} \in B_2$ then $y \in Z_{k+1} = E_{k+1}$, giving this contradiction. If $x_{k+1} \in B_1$ then $x_{k+1} = x_{j} + x_{j'}$ for some $j < j' < k+1$ with $\{x_{k+1},x_j,x_{j'}\} \subseteq B_1$; note that $E_j \cup E_{j'} \subseteq Y_k$. The plane $Q = \cl(\{x_j,x_{j'},w\})$ satisfies $Q \cap E = E_j \cup E_{j'} \cup E_{k+1}$ which is a three-element set; since $(Q \cap E,Q) \not \cong I_3$ it follows that $Q \cap E$ is a triangle and so $E_{k+1} \subseteq \cl(E_j \cup E_{j'}) \subseteq \cl(Y_k)$. Thus $\varnothing \ne E_{k+1} \subseteq Z_{k+1} \cap \cl(Y_k) = \{y\}$, giving $y \in E_{k+1}$, again a contradiction.
		
		Therefore the transversal $Y$ exists, but now $Y$ is a seven-element rank-$3$ subset of $E$, contradicting the fact that $M$ has no $F_7$-restriction. Therefore $P \cap B_2$ is a triangle. 
	\end{proof}
	
	\begin{claim}\label{biclawfree} 
		Neither $(B_1,H)$ nor $(B_2,H)$ has an induced $I_3$-restriction. 
	\end{claim}
	\begin{proof}[Subproof:]
		Suppose that $(B_i \cap P,P) \cong I_3$ for some plane $P$ of $H$, where $i \in \{1,2\}$. Let $B_i \cap P = \{x_1,x_2,x_3\}$ and $z = x_1 + x_2 + x_3$, so $z \in B_j$ for some $j \ne i$. Now ~\ref{tripletriangles} gives $\{x_1+x_2,x_1+x_3,x_2+x_3\} \subseteq B_j$, as each of its elements is in a triangle with $z$ and some $x_i$. Therefore $P \subseteq B_i \cup B_j$; note that $B_i \cap P$ is linearly independent and $|B_j \cap P| = 4$, so neither $B_i \cap P$ nor $B_j \cap P$ is a triangle of $P$. If $\{i,j\} = \{1,2\}$ then we obtain a contradiction to ~\ref{denseplane}. Otherwise, $j = 0$. If $i = 1$ then $E \cap \cl(P \cup \{w\})$ is a transversal of $\{x_1,x_1+w\}, \{x_2,x_2+w\}$ and $\{x_3,x_3+w\}$ and so is a three-element independent set, giving an induced $I_3$-restriction of $M$. If $i = 2$ then $E \cap \cl(P \cup \{w\}) = \cup_{1 \le i \le 3} \{x_i,x_i+w\}$ and so $E \cap P = \{x_1,x_2,x_3\}$ gives an induced $I_3$-restriction of $M$. In either case, we have a contradiction.
	\end{proof}
	
	\begin{claim}\label{b1ag}
		$(B_1,H)$ is triangle-free. 
	\end{claim}
	\begin{proof}[Subproof:]
		If $B_1$ contains a triangle $T$ of $H$, then let $z \in B_2$ and let $P = \cl(T \cup \{z\})$. By ~\ref{tripletriangles}, for each $x \in T$, we have $x + z \in B_1 \cup B_2$; thus $P \subseteq B_1 \cup B_2$. But $P \cap B_1$ contains a triangle and so $P \cap B_2$ does not; this contradicts ~\ref{denseplane}. 
	\end{proof}
	
	It follows from ~\ref{biclawfree}, \ref{b1ag} and Corollary~\ref{recag2} that $(B_1,\cl(B_1))$ is an affine geometry. Let $F = \cl(B_1)$, so $F-B_1$ is a hyperplane of $F$. If $F = H$ then $(B_1,H) \cong \AG(n-2,2)$, giving a contradiction. Otherwise $F \ne H$, so $r(B_1) < n-1$. Since $E \subseteq \cl(B_1 \cup B_2 \cup \{w\})$ and $r(M) = n$, we also have $r(B_1 \cup B_2) \ge n-1 > r(B_1)$, so there is some $y \in B_2 - F$. 
	
	\begin{claim}\label{b2trifree}
		$(B_2,H)$ is triangle-free. 
	\end{claim}
	\begin{proof}[Subproof:]Let $T \subseteq B_2$ be a triangle. If $T \cap F = \varnothing$ then let $x \in B_1$ and let $P = \cl(T \cup \{x\})$. Since $P \cap B_1 \subseteq P \cap F = \{x\}$ and each element of $P - (T \cup \{x\})$ is on a triangle containing $x$ and an element of $T$, ~\ref{tripletriangles} gives that $P - \{x\} \subseteq B_2$. Now $P$ contradicts \ref{denseplane}. 
		
		If $|T \cap F| \ge 1$ then let $z \in T \cap F$. Since $z$ belongs to the hyperplane $F-B_1$ of $F$, there is a triangle $T'$ of $F$ containing $z$ and two elements $u_1,u_2$ of $B_1$. Let $P = \cl(\{y\} \cup T')$. Let $u_1' = y + u_1$  and define $u_2',z'$ analogously, noting that $\{u_1',u_2',z'\} \subseteq H-F \subseteq B_0 \cup B_2$. By ~\ref{tripletriangles} we have $\{u_1',u_2'\} \subseteq B_2$. If $z' \in B_0$ then the triangle $\{z',u_1',u_2\}$ contradicts \ref{tripletriangles}. If $z' \in B_2$ then the plane $P$ contains five elements of $B_2$, contradicting \ref{denseplane}. 		
	\end{proof}
	Again by Lemma~\ref{recag2}, the matroid $(B_2,\cl(B_2))$ is an affine geometry. Let $z \in B_1$ and let $z' = z+y$. Since $z \in F$ and $y \in B_2-F$ we have $z' \in G - F \subseteq B_0 \cup B_2$ so $z' \in B_2$ by \ref{tripletriangles}, so $z \in \cl(\{y,z'\}) \subseteq \cl(B_2)$. Therefore $B_1 \subseteq y + B_2 \subseteq \cl(B_2)$, giving $n-1 = r(B_2 \cup B_1) = r(B_2)$, so $(B_2,H) \cong \AG(n-2,2)$, a contradiction. 
\end{proof}

We now restate and prove Theorem~\ref{maintech}. The proof uses the description of $\cE_3$ that is given by Theorem~\ref{e3structure}, which we also include in the statement.
\begin{theorem}\label{maintechrepeat}
	If $M$ is a full-rank matroid, then $M$ contains no induced $I_3$ or $F_7$ restriction if and only if either
	\begin{enumerate}[(1)]
		\item\label{mainoutcome2} $M \in \cE_3$ (equivalently, $M$ arises from a Bose-Burton geometry of order $1$ or $2$ by a sequence of semidoublings), or 			
		\item\label{mainoutcome1} there is some $t \ge 3$ such that $M$ arises from $\AGpp(t-1,2)$ by a sequence of doublings.
	\end{enumerate} 
\end{theorem}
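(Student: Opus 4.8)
The backward direction is the easy one: if $M \in \cE_3$, then no induced restriction of dimension $\ge 3$ has odd size, so in particular $M$ has no induced $F_7$-restriction (since $F_7$ has size $7$) and no induced $I_3$-restriction (since $I_3$ has size $3$); and if $M$ arises from $\AGpp(t-1,2)$ by doublings, then since neither $I_3$ nor $F_7$ is a doubling, Lemma~\ref{doublinggood} reduces the claim to checking that $\AGpp(t-1,2)$ itself has no induced $I_3$ or $F_7$-restriction, which is a short direct verification (an affine geometry plus one extra point: any plane meets it in $0,1,2$, or $4$ points, and in the size-$4$ case it is a triangle plus a point, never $I_3$; the size-$3$ case is a triangle; and no plane is all of $F_7$). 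The forward direction is the substance, and the plan is to induct on $n = \dim(M)$, the base cases $n \le 2$ being trivial since every matroid of dimension $\le 2$ lies in $\cE_3$.

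For the inductive step, assume $M = (E,G)$ is full-rank of dimension $n \ge 3$ with no induced $I_3$ or $F_7$-restriction. If $E = G$ then $M = \PG(n-1,2)$ contains $F_7$ as an induced restriction (any plane), a contradiction; so pick $w \in G - E$ and a hyperplane $H$ of $G$ with $w \notin H$. Partition $H = B_0 \cup B_1 \cup B_2$ according to $B_i = \{x \in H : |\{x, x+w\} \cap E| = i\}$, exactly as in Lemma~\ref{technical}. If $B_1 = \varnothing$, then every triangle through $w$ meets $E$ in $0$ or $2$ points, so $M$ is the doubling of $M|H$; and $M|H$ is a full-rank (after passing to $\cl(E \cap H)$, or one checks $H = \cl(E\cap H)$ here since $M$ is full-rank and a doubling) matroid of dimension $n-1$ with no induced $I_3$ or $F_7$ (these are restrictions of $M$), so by induction $M|H$ is in $\cE_3$ or arises from some $\AGpp(t-1,2)$ by doublings; either way, applying one more doubling keeps us in the same outcome — $\cE_3$ is closed under doubling (a doubling is a twist doubling by the empty matroid, which lies in $\cE_2$, so Lemma~\ref{dothetwist} applies), and in the second case we just append a doubling to the sequence. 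So we may assume $B_1 \ne \varnothing$.

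The key case is $B_1 \ne \varnothing$ and $B_2 \ne \varnothing$. Here Lemma~\ref{technical} gives that $(B_i, H) \cong \AG(n-2, 2)$ for some $i \in \{1,2\}$. Suppose first $i = 2$, so $B_2$ is an affine hyperplane-complement of $H$: let $H_0 = H - B_2$, a hyperplane of $H$. Then $B_0 \cup B_1 = H_0$, and I claim $B_1 = H_0$ (equivalently $B_0 = \varnothing$): if some $x \in H_0$ lay in $B_0$, pick a triangle $T \subseteq H$ through $x$ meeting $B_2$ (possible since $B_2 = H - H_0$ is large and $x$ together with any $y\in B_2$ determines such a $T$, whose third point $x+y \in H - H_0 = B_2$), and then $T$ meets $B_0, B_2$ but we need it to also meet $B_1$ to invoke Claim~\ref{tripletriangles} — so instead argue via Claim~\ref{denseplane}-type reasoning or directly: actually the cleanest route is that $M$ is then precisely a \emph{semidoubling} of $M|H$ with respect to the hyperplane $H_0$ (the points of $H_0 = B_1$ are "doubled" in the twist-doubling sense — each contributes exactly one of $x, x+w$ to $E$ — while points of $B_2 = H - H_0$ are "antidoubled"), matching the definition of semidoubling with $N = (G - H_0 \text{ restricted appropriately})$; wait, I need to recheck the parity convention: in a semidoubling w.r.t. $H_0$, the twist is by the affine geometry $N$ with $E(N) = $ complement of $H_0$, and $x \in D = E(N)$ iff $|\{x, x+w\}\cap E'| = 1$ iff $x \in B_1$, so $B_1 = G - H_0$, i.e. $B_1 = H - H_0$ and $B_0 \cup B_2 = H_0$. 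So it is the case $i=1$ that corresponds to semidoubling when additionally $B_0 = \varnothing$, and the case $i = 2$ needs separate handling. I will sort out which of $i=1, i=2$ gives semidoubling directly from the definitions, show the relevant $B_0 = \varnothing$ (using Claim~\ref{tripletriangles}: any $x \in B_0$ sits in a triangle with one point of $B_1$ and one of $B_2$ whenever both are nonempty and suitably spread, which they are since one of them is a full affine geometry of dimension $n-2 \ge 1$), conclude $M$ is a semidoubling of $M|H$, apply induction to $M|H$ (dimension $n-1$, same forbidden restrictions), and use that $\cE_3$ is closed under semidoublings (Corollary~\ref{e3grow}/Lemma~\ref{dothetwist}) to land in outcome~(\ref{mainoutcome2}) — noting that the $\AGpp$-by-doublings family is \emph{not} closed under semidoubling, so we must check that if $M|H$ is of that second type then the semidoubling forces us back into $\cE_3$, which should follow because a semidoubling of an $\AGpp$-by-doublings matroid of dimension $\ge 2$ turns out to lie in $\cE_3$ (its planes can be enumerated). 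The remaining subcase is $B_1 \ne \varnothing$, $B_2 = \varnothing$: then every triangle through $w$ meets $E$ in $0$ or $1$ points, so $G - E$ (which contains $w$) together with... hmm, rather: $E \cap H = $ picks exactly one of each pair, and no plane through $w$ can contain a triangle inside $E$, and combined with no induced $I_3$ this should force $M|H$ to be an affine geometry via Corollary~\ref{recag2} (triangle-freeness of $E\cap H$: a triangle in $E \cap H$ would, with $w$, give... we need it triangle-free and $I_3$-free), whence $M$ has an $\AG(n-1,2)$-restriction (or is one), and Lemma~\ref{recogniseagplus} finishes by putting $M$ into outcome~(\ref{mainoutcome1}) or $M \cong \AG(n-1,2) \in \cE_3$.

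The main obstacle I anticipate is the careful bookkeeping in the $B_1, B_2$ both-nonempty case: correctly matching the sets $B_0, B_1, B_2$ to the two structural outcomes (doubling vs.\ semidoubling vs.\ something needing a further reduction), proving the relevant part is \emph{empty} so that we genuinely get a doubling or semidoubling rather than a more general twist doubling by a non-affine matroid, and then verifying that applying the final doubling/semidoubling step does not escape both target families — in particular handling the interaction where $M|H$ is of $\AGpp$-type but the top-level operation is a semidoubling, which must be shown to produce a member of $\cE_3$. Everything else is either a direct appeal to Lemma~\ref{technical}, Lemma~\ref{recogniseagplus}, Corollary~\ref{recag2}, or the closure properties of $\cE_3$ under doubling and semidoubling established in Section~\ref{evensection}.
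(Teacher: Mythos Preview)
Your overall skeleton (induct on dimension, choose $w \in G - E$ and a hyperplane $H$, split into the $B_0, B_1, B_2$ partition, invoke Lemma~\ref{technical}) matches the paper, but there are two genuine gaps that your plan does not close.

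First, your claim that ``a semidoubling of an $\AGpp$-by-doublings matroid of dimension $\ge 2$ turns out to lie in $\cE_3$'' is false. If $M$ is a semidoubling of $M|H$, then $M|H$ is an induced restriction of $M$, so $M \in \cE_3$ would force $M|H \in \cE_3$. But any matroid arising from $\AGpp(t-1,2)$ with $t \ge 3$ by doublings has $\AGpp(2,2)$ (the five-element plane $P_5$) as an induced restriction, and $|P_5| = 5$ is odd, so such a matroid is never in $\cE_3$. Thus your handling of the interaction case --- $M|H$ of $\AGpp$-type with a semidoubling on top --- is not just incomplete but wrong. You also leave the $i = 2$ case (where $(B_2,H) \cong \AG(n-2,2)$) unresolved; as you correctly work out, this is \emph{not} a semidoubling, and you give no substitute argument.

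The paper closes both gaps with one device you are missing: a careful choice of $w$. Rather than picking $w$ arbitrarily, the paper picks a triangle $T = \{u,v,w\}$ with $T \cap E = \{u,v\}$, \emph{preferring} (when possible) a $T$ that sits inside a plane $P$ with $M|P \cong P_5$. It then chooses $H$ via a basis of $M$ through $\{u,v\}$, which guarantees both $v \in B_2$ (so the case $B_2 = \varnothing$ never arises) and that $M|H$ is full-rank. In the $i = 2$ case, the set $B_2 \cup (w + B_2)$ is an $\AG(n-1,2)$-restriction of $M$, and Lemma~\ref{recogniseagplus} immediately puts $M$ in outcome~(\ref{mainoutcome1}). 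In the $i = 1$ case, the $P_5$-preference in the choice of $T$ is used to argue that $M$ has \emph{no} induced $P_5$-restriction at all (any such $P_5$ would contain a triangle with two elements of $E$, so would have been chosen, but that is incompatible with $(B_1,H) \cong \AG(n-2,2)$). Hence $M|H$ cannot be of $\AGpp$-type, and the troublesome interaction case simply does not occur.
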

\begin{proof}
	Let $M = (E,G)$ and $n = r(M) = \dim(M)$. If $M$ satisfies (\ref{mainoutcome1}) or (\ref{mainoutcome2}), then it follows from Lemma~\ref{doublinggood} or the definition of $\cE_3$ that $M$ has no induced $I_3$ or $F_7$ restriction. It remains to prove the converse. The theorem is trivial for $n = 2$; suppose inductively that $n \ge 3$ and that the result holds for smaller $n$. 
	
	Let $P_5$ denote the three-dimensional matroid with five elements.  Note that if $M|P \cong P_5$ for some plane $P$, then $P$ contains a triangle $T$ with $|T \cap E| = 2$. Indeed, since $M$ is full-rank and $F_7$-free, it is not a projective geometry, so $G$ has a triangle $T$ with $|T \cap E| = 2$, even if $M$ is $P_5$-free. Let $T$ be a triangle of $G$ for which $|T \cap E| = 2$ and, if possible, so that $M|P \cong P_5$ for some plane $P$ of $G$ containing $T$. 
	Let $T = \{u,v,w\}$ where $T \cap E = \{u,v\}$. 
	
	Let $J$ be a basis of $M$ containing $\{u,v\}$ and let $H = \cl(J-\{u\})$; note that $w \notin H$ and that $r_M(E \cap H) = |J-\{u\}| = n-1$, so the matroid $M_0 = M|H$ is full-rank. Let $B_i = \{x \in H \colon |\{x,x+w\} \cap E| = i\}$ for each $i \in \{0,1,2\}$. By construction, we have $v \in B_2$. If $B_1 = \varnothing$, then $M_0 = (B_2,H)$ and $M$ is the doubling of $M_0$. Inductively, $M_0$ satisfies (\ref{mainoutcome2}) or (\ref{mainoutcome1}), and, since $\cE_3$ is closed under doublings, so does $M$. We may thus assume that $B_1 \ne \varnothing$ and so, by Lemma~\ref{technical}, that $(B_i,H) \cong \AG(n-2,2)$ for some $i \in \{1,2\}$. If $i = 2$ then $M$ contains $B_2 \cup (B_2 + w) \cong \AG(n-1,2)$ as a restriction, so Lemma~\ref{recogniseagplus} gives (\ref{mainoutcome1}). Assume therefore that $i = 1$. If $M$ has an induced $P_5$-restriction, then by choice of $T$, there is such a restriction of the form $M|P$ where $T \subseteq P$; since $w \in P-E$, the set $P \cap H$ is a triangle containing two elements of $B_2$ and one element of $B_1$, which is incompatible with $(B_1,H) \cong \AG(n-2,2)$. Therefore $M$ has no induced $P_5$-restriction. 
	
	Let $H_0$ be the $(r-2)$-dimensional flat $H-B_1$. Now $H_0 = B_0 \cup B_2$, so for every $x \in H_0$, we have $|E \cap \{x,x+w\}| \in \{0,2\}$ and for every $x \in H-H_0 = B_1$ we have $|E \cap \{x,x+w\}| = 1$. It follows that $M$ is the semidoubling of $M_0$ with respect to $H_0$.
	
	Inductively, $M_0$ satisfies (\ref{mainoutcome2}) or (\ref{mainoutcome1}). If (\ref{mainoutcome2}) holds for $M_0$ then it holds for $M$. If (\ref{mainoutcome1}) holds for $M_0$ then $M$ has an induced $\AGpp(t-1,2)$-restriction for some $t \ge 3$; any such matroid has an induced $P_5$-restriction, contradicting our argument that $M$ had no such restriction.
\end{proof}
We remark that, since a matroid $M = (E,G)$ is $(I_3,F_7)$-free if and only if $(E,\cl(E))$ is, this theorem can easily be extended to apply to matroids that are not full-rank. 
Note that outcome (\ref{mainoutcome1}) above implies that $\chi(M) = \chi(\AGpp(t-1,2)) = 2$ by Lemma~\ref{doublinggood}. Using this fact, Theorem~\ref{main2} follows from Theorems~\ref{maintechrepeat} and~\ref{hungry}. Theorem~\ref{main1} follows from Theorem~\ref{maintechrepeat} and one more lemma.

\begin{lemma}If $M \in \cE_3$ and $\chi(M) \ge 3$ then $M$ has an induced $K_5$-restriction. 
\end{lemma}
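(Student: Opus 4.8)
The plan is an induction on $n = \dim(M)$. For $n \le 3$ the statement is vacuous: a matroid in $\cE_3$ of dimension at most $3$ with $\chi(M)\ge 3$ would satisfy $n=3=\chi(M)=r(M)$, which forces $G-E$ to contain no point, i.e.\ $M \cong F_7$, and $F_7 \notin \cE_3$. So the first real case is $n=4$, and the induction is well founded once that case is settled.

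\emph{Base case $n=4$.} Here $G \cong \PG(3,2)$. Since $\chi(M)\ge 3$, the set $G-E$ contains no flat of codimension $2$, i.e.\ no triangle of $G$, so $G-E$ is a cap; caps in $\PG(3,2)$ have at most $5$ points. As $M\in\cE_3$ and $\dim(M)\ge 3$, $|E|$ is even, so $|G-E|$ is odd, hence $|G-E|\in\{1,3,5\}$. The membership condition for $\cE_3$ says exactly that $|(G-E)\cap P|$ is odd for every plane $P$ of $G$; a short check shows that if $|G-E|\in\{1,3\}$ then some plane meets $G-E$ in $0$ or $2$ points, a contradiction. Hence $|G-E|=5$, so $G-E$ is a maximum cap of $\PG(3,2)$. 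Since the maximum caps of $\PG(3,2)$ form a single orbit under the collineation group, and the set $G-E(K_5)$ of weight-$3$-or-$4$ vectors is one such cap, we conclude $M \cong K_5$.

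\emph{Inductive step, $n\ge 5$.} Since $\cE_3$ is closed under taking induced restrictions, it suffices to find a hyperplane $H$ of $G$ with $\chi(M|H)\ge 3$: then $M|H\in\cE_3$ has dimension $n-1\ge 4$ and critical number at least $3$, so the inductive hypothesis gives $M|H$ — hence $M$ — an induced $K_5$-restriction. If $\chi(M)\ge 4$ this is immediate, as $\chi(M|H)\ge \chi(M)-1\ge 3$ for \emph{every} hyperplane $H$. So we may assume $\chi(M)=3$, and the task reduces to finding a hyperplane $H$ with $\chi(M|H)=3$, equivalently one containing no codimension-$3$ flat of $G$ that is disjoint from $E$.

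\emph{The crux.} The obstacle is to rule out that every hyperplane $H$ has $\chi(M|H)=2$; note this cannot follow from anything trivial, since it genuinely occurs when $n=4$ (that is precisely the case $M\cong K_5$), so the argument must use $n\ge 5$. I would proceed as follows. By Corollary~\ref{findsemidoubler1}, for each hyperplane $H$ the matroid $M$ is a semidoubling of $M|H$ with respect to some hyperplane $H_0$ of $H$, and Lemma~\ref{pushchi} then gives $\chi(M|H_0)=\chi(M)-1=2$. Fix a codimension-$3$ flat $F^\ast$ with $F^\ast\cap E=\varnothing$ (it exists since $\chi(M)=3$) and examine the seven codimension-$2$ flats and seven hyperplanes of $G$ through $F^\ast$: each such codimension-$2$ flat $K$ has $\dim(K)\ge 3$ and $M|K\in\cE_3$, so $|E\cap K|$ is even, and it is nonempty because $\chi(M)=3$. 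Combining this parity information with the assumption that \emph{every} hyperplane of $G$ contains some codimension-$3$ flat disjoint from $E$, one forces $G-E$ to contain a codimension-$2$ flat, contradicting $\chi(M)=3$. Intuitively: once $\chi(M)=3$ is fixed, the ``badly behaved'' part of $G-E$ has size bounded independently of $n$, so for $n\ge 5$ there is enough room to choose a hyperplane meeting it transversally; making this precise within the semidoubling/$\cE_3$ machinery is the main technical point. Having produced a hyperplane $H$ with $\chi(M|H)=3$, the induction closes.
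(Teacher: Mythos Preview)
Your base case ($n=4$) is correct and gives a pleasant characterisation of $K_5$ as the unique dimension-$4$ member of $\cE_3$ with $\chi\ge 3$. The inductive step is also fine when $\chi(M)\ge 4$. The genuine gap is the case $\chi(M)=3$ with $n\ge 5$: you need a hyperplane $H$ with $\chi(M|H)=3$, and you have not proved one exists. Your sketch (``combining this parity information\ldots one forces $G-E$ to contain a codimension-$2$ flat'') is not an argument; you yourself flag that ``making this precise\ldots is the main technical point'' and then stop. The constraints you list---that each codimension-$2$ flat through a fixed $E$-free codimension-$3$ flat $F^\ast$ meets $E$ in a nonzero even set---do not visibly prevent every hyperplane from containing some $E$-free codimension-$3$ flat, and the semidoubling/Lemma~\ref{pushchi} observation you make only produces codimension-$2$ flats $H_0$ with $\chi(M|H_0)=2$, which is one short of what your induction needs. (The hyperplane you want does exist, since any induced $K_5$ sits inside a hyperplane and $\chi(K_5)=3$; but of course that reasoning is circular here.)

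The paper's proof avoids this difficulty by abandoning induction and constructing a $K_5$-flat directly. Fix any hyperplane $H$; by Corollary~\ref{findsemidoubler1}, $M$ is the semidoubling of $M|H$ by some $w$ with respect to a hyperplane $H'$ of $H$, and Lemma~\ref{pushchi} gives $\chi(M|H')=\chi(M)-1\ge 2$. Since every matroid in $\cE_3$ is $I_3$-free, Corollary~\ref{recag2} forces $M|H'$ to contain a triangle $T\subseteq E$. Extend $T$ to a plane $P$ of $H$ with $P\cap H'=T$; parity in $\cE_3$ then pins down $M|P$ as $K_4$ or $\overline{I_3}$, and a direct check shows that semidoubling either of these with respect to the triangle $T$ yields $K_5$, so $M|\cl(P\cup\{w\})\cong K_5$. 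The key idea you are missing is to drop \emph{two} dimensions (to $H'$) and work with $\chi\ge 2$ to locate a triangle, rather than trying to preserve $\chi\ge 3$ in codimension one.
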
	
\begin{proof}
	Let $H$ be a hyperplane of $M = (E,G)$. By Corollary~\ref{findsemidoubler1} there is some hyperplane $H'$ of $H$ and some $w \in E-H$ for which $M$ is the semidoubling of $M|H$ by $w$ with respect to $H'$. By Lemma~\ref{pushchi} we have $\chi(M|H') = \chi(M)-1 \ge 2$. If $M|H'$ is triangle-free then Corollary~\ref{recag2} implies that $(E \cap H',\cl(E \cap H'))$ is an affine geometry and so $\chi(M|H') = 1$, a contradiction; thus $M|H'$ contains a triangle $T$ of $H'$. Let $P$ be a plane of $H$ with $P \cap H' = T$. Note that $M|\cl(P \cup \{w\})$ is the semidoubling of $M|P$ by $w$ with respect to $T$. Since $|T \cap E|$ is odd and $|P \cap E|$ is even we have $|(P-T) \cap E| \in \{1,3\}$, so $M|P$ is isomorphic to either $K_4$ or $\overline{I_3}$. It is routine to check (using the fact that $K_5$ is the complement of a $5$-cycle in $\PG(3,2)$) that semidoubling either of these matroids with respect to a triangle contained in the ground set gives $K_5$; thus $M|\cl(P \cup \{w\}) \cong K_5$ as required. 
\end{proof}


\section*{Acknowledgments} 
We thank both the anonymous referees for their careful reading and helpful suggestions. 

\bibliographystyle{amsplain}


\begin{aicauthors}
\begin{authorinfo}[Bonamy]
  Marthe Bonamy\\ 
  CNRS Researcher\\
  LaBRI, University of Bordeaux\\
  Bordeaux, France\\
  marthe.bonamy\imageat{}labri\imagedot{}fr \\
  \url{http://www.labri.fr/perso/mbonamy/}
\end{authorinfo}
\begin{authorinfo}[Kardos]
  Franti\v{s}ek Kardo\v{s}\\
  Associate Professor\\
  LaBRI, University of Bordeaux\\
  Bordeaux, France\\
  fkardos\imageat{}labri\imagedot{}fr \\
  \url{http://www.labri.fr/perso/fkardos/}
\end{authorinfo}
\begin{authorinfo}[Kelly]
  Tom Kelly\\
  Ph.D. Student\\
  University of Waterloo\\
  Waterloo, Ontario, Canada\\
  t9kelly\imageat{}uwaterloo\imagedot{}ca\\
  \url{https://www.math.uwaterloo.ca/~t9kelly/}
\end{authorinfo}
\begin{authorinfo}[Nelson]
  Peter Nelson\\
  Assistant Professor\\
  University of Waterloo\\
  Waterloo, Ontario, Canada\\
  apnelson\imageat{}uwaterloo\imagedot{}ca\\
  \url{https://www.math.uwaterloo.ca/~apnelson/}
\end{authorinfo}
\begin{authorinfo}[Postle]
	Luke Postle\\
	Assistant Professor\\
	University of Waterloo\\
	Waterloo, Ontario, Canada\\
	lpostle\imageat{}uwaterloo\imagedot{}ca\\
	\url{https://www.math.uwaterloo.ca/~lpostle/}
\end{authorinfo}
\end{aicauthors}

\end{document}